\documentclass[12pt]{article}
\usepackage{amsmath, amssymb,latexsym}
\usepackage{color}
\usepackage{graphicx}
\title{Eigenvalues and subelliptic estimates for non-selfadjoint semiclassical operators with double characteristics}
\author{Michael Hitrik\\\small Department of Mathematics
\\\small University of California \\\small Los Angeles CA \\\small 90095-1555, USA
\\\small hitrik@math.ucla.edu \and Karel Pravda-Starov
\\\small D\'epartement de Math\'ematiques
\\\small Universit\'e de Cergy-Pontoise
\\\small Site de St Martin, 2 avenue Adolphe Chauvin
\\\small 95302 Cergy-Pontoise Cedex, France
\\\small karel.pravda-starov@u-cergy.fr}
\date{}

\def\wrtext#1{\relax\ifmmode{\leavevmode\hbox{#1}}\else{#1}\fi}
\def\abs#1{\left|#1\right|}
\def\begeq{\begin{equation}}
\def\endeq{\end{equation}}

\def\neigh{neighborhood}

\def\Re{{\rm Re\,}}
\def\Im{{\rm Im\,}}

\textheight=20cm
\textwidth=15.0cm
\flushbottom
\topskip 0.3 in
\headsep 0pt
\footskip 54pt
\oddsidemargin 0.2cm
\evensidemargin 0.2cm

\newcommand{\eps}{\varepsilon}
\def\part#1{\frac{\partial}{\partial #1}}

\def\norm#1{||\,#1\,||}

\newcommand{\real}{\mbox{\bf R}}
\newcommand{\comp}{\mbox{\bf C}}

\newcommand{\nat}{\mbox{\bf N}}

\renewcommand{\Re}{\mbox{\rm Re\,}}
\renewcommand{\Im}{\mbox{\rm Im\,}}

\renewcommand{\exp}{\mbox{\rm exp\,}}

\topmargin=0cm

\newtheorem{dref}{Definition}[section]
\newtheorem{lemma}[dref]{Lemma}
\newtheorem{theo}[dref]{Theorem}
\newtheorem{prop}[dref]{Proposition}

\newenvironment{proof}{\vspace{.3cm}\noindent{{\em Proof:}}}{\hfill$\Box$}

\begin{document}
\maketitle

\vspace*{1cm}
\noindent
{\bf Abstract}: For a class of non-selfadjoint $h$--pseudodifferential operators with double characteristics, we give a precise description
of the spectrum and establish accurate semiclassical resolvent estimates in a neighborhood of the origin. Specifically,
assuming that the quadratic approximations of the principal symbol of the operator along the double characteristics enjoy a partial ellipticity
property along a suitable subspace of the phase space, namely their singular space, we give a precise description of the spectrum of the operator in
an ${\cal O}(h)$--neighborhood of the origin. Moreover, when all the singular spaces are reduced to zero, we establish accurate semiclassical
resolvent estimates of subelliptic type, which depend directly on algebraic properties of the Hamilton maps associated to the quadratic
approximations of the principal symbol.

\vskip 2.5mm
\noindent
{\bf Keywords and Phrases:} non-selfadjoint operator, eigenvalue, resolvent estimate, subelliptic estimates, double characteristics, singular space,
pseudodifferential calculus, Wick calculus, FBI transform, Grushin problem

\tableofcontents
\section{Introduction}
\setcounter{equation}{0}

In this work, we are concerned with the analysis of spectral properties for general non-selfadjoint pseudodifferential operators with double characteristics. This study was initiated in~\cite{HiPr2}, and our purpose here is to complement the results of~\cite{HiPr2} on two essential points, as we describe below.
Assume that we are given a non-selfadjoint semiclassical pseudodifferential operator
$$
P=P^w(x,hD_x;h), \ 0<h \leq 1;
$$
defined by the semiclassical Weyl quantization of the symbol $P(x,\xi;h)$,
$$
P^w(x,hD_x;h)u(x)=\frac{1}{(2\pi)^n}\int_{{\bf R}^{2n}}e^{i(x-y).\xi}P\Big(\frac{x+y}{2},h\xi;h\Big)u(y)dyd\xi,
$$
with a semiclassical asymptotic expansion
$$
P(x,\xi;h) \sim \sum_{j=0}^{+\infty}h^j p_j(x,\xi),
$$
such that its principal symbol $p_0$ has a non-negative real part
$$
\textrm{Re }p_0(X)\geq 0,\ X=(x,\xi) \in \real^{2n},
$$
and such that we have a finite number of doubly characteristic points $X_0$ for the operator,
$$
p_0(X_0)=\nabla p_0(X_0)=0.
$$
Our interest is in studying spectral properties and the resolvent growth of the operator $P$ in a fixed neighborhood of the origin. In the previous work~\cite{HiPr2}, we established an accurate semiclassical a priori estimate
\begin{equation}
\label{dl2}
h \norm{u}_{L^2} \leq C_0\norm{(P-hz)u}_{L^2},\,\,\, \abs{z} \leq C,
\end{equation}
valid in an ${\cal O}(h)$-neighborhood of the origin, when the quadratic approximations $q$ of the principal symbol $p_0$ at the doubly characteristic points enjoy the partial ellipticity property
\begeq
\label{ell}
(x,\xi) \in S, \ q(x,\xi)=0\,  \Rightarrow\,  (x,\xi)=0.
\endeq
Here $S$ is a suitable subspace of the phase space, namely the singular space associated to $q$~\cite{HiPr1}, and the spectral parameter $z$ in (\ref{dl2}) avoids a discrete set depending on the values of the subprincipal symbol $p_1$ and the spectra of the quadratic approximations of the
principal symbol $p_0$ at the doubly characteristic points. The a priori estimate (\ref{dl2}) gives a first localization and bounds on the low lying eigenvalues of the operator $P$, i.e., when restricting the attention to an ${\cal O}(h)$-neighborhood of the origin in the complex spectral plane. In the first part of the present work, we shall push this analysis further and give a precise description of the spectrum of the operator $P$ in an ${\cal O}(h)$-neighborhood of the origin, with complete semiclassical asymptotic expansions for the eigenvalues. That such a study is planned by the authors was mentioned in~\cite{HiPr2}.

\medskip
\noindent
In the second part of this work, we shall be concerned with the behavior of the resolvent norm of $P$ in a sufficiently small but fixed neighborhood of the origin. We shall actually show that this behavior is linked to subelliptic properties of the quadratic approximations of the principal symbol $p_0$ at the doubly characteristic points, and that the positive integers $k_0$ appearing in the resolvent estimates
$$
h^{\frac{2k_0}{2k_0+1}}|z|^{\frac{1}{2k_0+1}}\|u\|_{L^2} \leq C_0\|Pu-zu\|_{L^2},
$$
depend directly on the loss of derivatives associated to the subelliptic properties of these quadratic operators. We shall show how the positive integers $k_0$ are intrinsically associated to the structure of the doubly characteristic set, and how they are completely characterized by algebraic properties of the Hamilton maps associated to the quadratic approximations of the principal symbol.

\medskip
\noindent
As in \cite{HiPr2}, the starting point for this work has been the general study of the Kramers-Fokker-Planck type operators carried out by
F.~H\'erau, J.~Sj\"ostrand and C.~Stolk in~\cite{HeSjSt}. This study has been a major breakthrough in the understanding of the spectral
properties of some general classes of pseudodifferential operators that are neither selfadjoint nor elliptic. We draw our
inspiration considerably from this work and use many techniques developed in the analysis of~\cite{HeSjSt}. By using some of these techniques,
together with the recent improvements in the understanding of spectral and subelliptic properties of non-elliptic quadratic operators obtained
in~\cite{HiPr1} and~\cite{Karel11}, here we are able to extend to a large class of non-selfadjoint semiclassical pseudodifferential operators
with double characteristics the results proved in~\cite{HeSjSt} for the case of operators of Kramers-Fokker-Planck type.

\subsection{Miscellaneous facts about quadratic differential operators}
Before giving the precise statement of the main results contained in this article, we shall recall miscellaneous facts and notation concerning
quadratic differential operators. Associated to a complex-valued quadratic form
\begin{eqnarray*}
q : \real_x^n \times \real_{\xi}^n &\rightarrow& \comp \\
 (x,\xi) & \mapsto & q(x,\xi),
\end{eqnarray*}
with $n \in \nat^*$, is the Hamilton map $F \in M_{2n}(\comp)$ uniquely defined by the identity
\begin{equation}
\label{10}
q\big{(}(x,\xi);(y,\eta) \big{)}=\sigma \big{(}(x,\xi),F(y,\eta) \big{)}, \ (x,\xi) \in \real^{2n},  (y,\eta) \in \real^{2n},
\end{equation}
where $q\big{(}\textrm{\textperiodcentered};\textrm{\textperiodcentered} \big{)}$ stands for the polarized form
associated to the quadratic form $q$ and $\sigma$ is the canonical symplectic form on $\real^{2n}$,
\begin{equation}
\label{11}
\sigma \big{(}(x,\xi),(y,\eta) \big{)}=\xi.y-x.\eta, \ (x,\xi) \in \real^{2n},  (y,\eta) \in \real^{2n}.
\end{equation}
It follows directly from the definition of the Hamilton map $F$ that
its real and imaginary parts, denoted respectively by $\textrm{Re } F$ and $\textrm{Im }F$,
$$\textrm{Re }F=\frac{1}{2}(F+\overline{F}), \ \textrm{Im }F=\frac{1}{2i}(F-\overline{F}),$$
with $\overline{F}$ being the complex conjugate of $F$, are the Hamilton maps associated
to the quadratic forms $\textrm{Re } q$ and $\textrm{Im }q$, respectively; and that a
Hamilton map is always skew-symmetric with respect to $\sigma$. This fact is just a consequence of the
properties of the skew-symmetry of the symplectic form and the symmetry of the polarized form,
\begin{equation}
\label{12}
\forall X,Y \in \real^{2n},\, \sigma(X,FY)=q(X;Y)=q(Y;X)=\sigma(Y,FX)=-\sigma(FX,Y).
\end{equation}
We defined in~\cite{HiPr1} the singular space $S$ associated to the quadratic symbol $q$ as the following intersection of kernels,
\begin{equation}\label{h1}
S=\Big(\bigcap_{j=0}^{2n-1}\textrm{Ker}\big[\textrm{Re }F(\textrm{Im }F)^j \big]\Big) \bigcap \real^{2n},
\end{equation}
where $F$ stands for the Hamilton map of $q$, and we proved in Theorem~1.2.2 in~\cite{HiPr1}, that when a quadratic symbol $q$ with a non-negative real part is elliptic on its singular space $S$,
\begin{equation}
\label{sm2}
(x,\xi) \in S, \ q(x,\xi)=0\, \Rightarrow\,  (x,\xi)=0,
\end{equation}
then the spectrum of the quadratic operator  $q^w(x,D_x)$ is only composed of eigenvalues of finite multiplicity and is given by
\begin{equation}
\label{sm6}
\sigma\big{(}q^w(x,D_x)\big{)}=\Big\{ \sum_{\substack{\lambda \in \sigma(F), \\  -i \lambda \in {\bf C}_+
\cup (\Sigma(q|_S) \setminus \{0\})
} }
{\big{(}r_{\lambda}+2 k_{\lambda}
\big{)}(-i\lambda) : k_{\lambda} \in \nat}
\Big\}.
\end{equation}
Here $r_{\lambda}$ is the dimension of the space of generalized eigenvectors of $F$ in $\comp^{2n}$ belonging to the eigenvalue $\lambda \in \comp$, and
$$
\Sigma(q|_S)=\overline{q(S)} \textrm{ and } \comp_+=\{z \in \comp : \textrm{Re }z>0\}.
$$
It follows from (\ref{h1}) that the closure of the range of $q$ along $S$, $\Sigma(q|_S)$, satisfies $\Sigma(q|_S) \subset i\real$.

\bigskip
\noindent
\textit{Remark.} Equivalently, one can describe the singular space as the subset in the phase space where all the Poisson brackets
$H_{\textrm{Im}\, q}^k\textrm{Re }q$, $k \in \nat$, are vanishing,
$$S=\{X \in \real^{2n} : H_{\textrm{Im}\, q}^k\textrm{Re }q(X)=0, k \in \nat\}.$$
The singular space is therefore exactly the set of points $X_0$ in the phase space where the real part of $q$ under the flow generated by the
Hamilton vector field associated to its imaginary part $\textrm{Im }q$,
$$t \mapsto \textrm{Re }q(e^{tH_{\textrm{Im}\,q}}X_0),$$
vanishes to an infinite order at $t=0$. We refer to Section~2 in~\cite{HiPr1} to find all the arguments needed to establish this second
equivalent description of the singular space.

\bigskip
\noindent
We shall finish this subsection by recalling that quadratic operators with a zero singular space $S=\{0\}$, enjoy noticeable subelliptic properties.
Specifically, when $q^w(x,D_x)$ stands for a quadratic operator whose Weyl symbol $q$ has a non-negative real part
$\textrm{Re }q \geq 0$, and a zero singular space $S=\{0\}$, it was established in~\cite{Karel11} that it fulfills the subelliptic estimate
\begin{equation}
\label{dl1}
\big\|\big(\langle(x,\xi)\rangle^{2/(2k_0+1)}\big)^w u\big\|_{L^2} \leq C\big(\|q^w(x,D_x) u\|_{L^2}+\|u\|_{L^2}\big), \ u \in \mathcal{S}(\real^n),
\end{equation}
with a loss of $2k_0/(2k_0+1)$ derivatives, where $\langle(x,\xi)\rangle=(1+|x|^2+|\xi|^2)^{1/2}$ and $k_0$ stands for the smallest integer
$0 \leq k_0 \leq 2n-1$ such that
$$
\Big(\bigcap_{j=0}^{k_0}\textrm{Ker}\big[\textrm{Re }F(\textrm{Im }F)^j \big]\Big) \bigcap \real^{2n}=\{0\}.
$$
Such a non-negative integer $k_0$ is well-defined since $S=\{0\}$.

\subsection{Statement of the main results}
Let us now state the main results contained in this paper. Let $m\geq 1$  be a $C^{\infty}$ order function on $\real^{2n}$ fulfilling
\begin{equation}
\label{eq1.1}
\exists C_0 \geq 1, N_0>0,\ m(X)\leq C_0 \langle{X-Y\rangle}^{N_0} m(Y),\ X,Y\in \real^{2n},
\end{equation}
where $\langle X \rangle=(1+|X|^2)^{\frac{1}{2}}$, and let $S(m)$ be the symbol class
$$
S(m)=\left\{ a\in C^{\infty}(\real^{2n},\comp): \forall \alpha \in \nat^{2n}, \exists C_{\alpha}>0, \forall X \in \real^{2n},
\  |\partial_X^{\alpha} a(X)| \leq C_{\alpha} m(X)\right\}.$$
We shall assume in the following, as we may, that $m$ belongs to its own symbol class $m\in S(m)$.

\medskip
\noindent
Considering a symbol $P(x,\xi;h)$ with a semiclassical asymptotic expansion in the symbol class $S(m)$,
\begin{equation}
\label{xi1}
P(x,\xi;h) \sim \sum_{j=0}^{+\infty} h^j p_j(x,\xi),
\end{equation}
with some $p_j \in S(m)$, $j \in \nat$, independent of the semiclassical parameter $h$, such that its principal symbol $p_0$ has a non-negative real part
\begin{equation}
\label{eq1.4}
\textrm{Re }p_0(X)\geq 0,\ X=(x,\xi) \in \real^{2n},
\end{equation}
we shall study the operator
\begin{equation}
\label{eq1.3}
P = P^w(x,hD_x;h),\ 0<h\leq 1,
\end{equation}
defined by the $h$-Weyl quantization of the symbol $P(x,\xi;h)$, that is, the Weyl quantization of the symbol $P(x,h\xi;h)$,
\begin{equation}\label{quant}
P^w(x,hD_x;h)u(x)=\frac{1}{(2\pi)^n}\int_{{\bf R}^{2n}}e^{i(x-y).\xi}P\Big(\frac{x+y}{2},h\xi;h\Big)u(y)dyd\xi.
\end{equation}

\medskip
\noindent
We shall make the important assumption that $\Re p_0$ is elliptic at infinity in the sense that for some
$C>1$, we have
\begeq
\label{eq1.5}
\Re p_0(X) \geq \frac{m(X)}{C},\quad \abs{X}\geq C.
\endeq
The ellipticity assumption (\ref{eq1.5}) implies that, for $h>0$ small enough and when equipped with the domain
$$
{\cal D}(P) = H(m):=\left(m^w(x,hD)\right)^{-1}\left(L^2(\real^n)\right),
$$
the operator $P$ becomes closed and densely defined on $L^2(\real^n)$. Furthermore, another basic consequence of (\ref{eq1.4}) and (\ref{eq1.5}) is
that when $z\in {\rm neigh}(0,\comp)$, the analytic family of operators
$$
P-z: H(m)\rightarrow L^2(\real^n),
$$
is Fredholm of index $0$, for all $h>0$ small enough --- see, e.g., \cite{DeSjZw}. An application of analytic Fredholm theory allows us then to
conclude that the spectrum of $P$ in a small  but fixed \neigh{} of $0\in \comp$ is discrete and consists of eigenvalues of finite algebraic
multiplicity.

\medskip
We shall assume that the characteristic set of the real part of the principal symbol~$p_0$,
$$(\textrm{Re }p_0)^{-1}(0)\subset \real^{2n},$$
is finite, so that we may write it as
\begin{equation}
\label{eq1.6}
(\textrm{Re } p_0)^{-1}(0) = \{X_1,...,X_N\}.
\end{equation}
The sign assumption (\ref{eq1.4}) implies in particular that we have
$$
d\textrm{Re }p_0(X_j)=0,
$$
for all $1 \leq j \leq N$, and we shall actually assume that these points are all doubly characteristic for the full principal symbol $p_0$,
\begin{equation}
\label{eq1.6.5}
p_0(X_j)=dp_0(X_j)=0,\ 1\leq j\leq N,
\end{equation}
so that we may write
\begin{equation}
\label{eq1.6.6}
p_0(X_j+Y)=q_j(Y)+\mathcal{O}(Y^3),
\end{equation}
when $Y\rightarrow 0$. Here $q_j$ is the quadratic form which begins the Taylor expansion of the principal symbol $p_0$ at $X_j$.
Notice that the sign assumption (\ref{eq1.4}) implies that the complex-valued quadratic forms $q_j$ have non-negative real parts,
\begin{equation}
\label{kps1}
\Re q_j\geq 0,
\end{equation}
when $1 \leq j \leq N$. We shall assume throughout the present work that when $1\leq j \leq N$, the quadratic form $q_j$ is elliptic along the associated singular space $S_j$ introduced in (\ref{h1}), in the sense of (\ref{ell}).

\bigskip
\noindent
The following result was established in~\cite{HiPr2}, under the assumptions above: let $C>1$ and assume that $z\in \comp$ with $\abs{z}\leq C$ is such that for all $1\leq j \leq N$, we have $z - p_1(X_j) \notin \Omega_j$, where $\Omega_j\subset \comp$ is a fixed \neigh{} of the spectrum of the quadratic operator $q_j^w(x,D_x)$. Then for all $h>0$ small enough, the following a priori estimate holds,
\begeq
\label{eq1.6.7}
h\norm{u}\leq {\cal O}(1) \norm{(P-hz)u},\quad u\in {\cal S}(\real^n).
\endeq
Here $\norm{\cdot}$ is the $L^2$--norm on $\real^n$. In view of the observations made above, we see that the estimate (\ref{eq1.6.7}) extends to all of ${\cal D}(P)=H(m)$, since the Schwartz space ${\cal S}(\real^n)$ is dense in the latter. The operator $P-hz: H(m)\rightarrow L^2(\real^n)$ is therefore injective with closed range, and thus invertible, thanks to the Fredholm property. We conclude that when $z\in \comp$ is as above, then $hz$ is not an
eigenvalue of $P$ and the resolvent estimate
\begeq
\label{eq1.6.8}
\left(P-hz\right)^{-1}  = {\cal O}\left(\frac{1}{h}\right): L^2(\real^n) \rightarrow L^2(\real^n)
\endeq
holds true.

\bigskip
\noindent
The following is the first main result of this work.
\begin{theo}
\label{theo1}
Let us make the assumptions {\rm (\ref{eq1.4})}, {\rm (\ref{eq1.5})}, {\rm (\ref{eq1.6})}, and {\rm (\ref{eq1.6.5})}. Assume
furthermore that the quadratic form $q_j$ introduced in {\rm (\ref{eq1.6.6})} is elliptic along the singular space $S_j$, when $1\leq j \leq N$.
Let $C>0$. Then there exists $h_0>0$ such that for all $0< h \leq h_0$, the spectrum of the operator $P$ in the open disc in the complex plane $D(0,Ch)$ is given by the eigenvalues of the form,
\begeq
\label{eq1.9}
z_{j,k} \sim h \left(\lambda_{j,k} + p_1(X_j) + h^{1/N_{j,k}} \lambda_{j,k,1} + h^{2/N_{j,k}} \lambda_{j,k,2} +\ldots\right),\,\, 1\leq j \leq N.
\endeq
Here $\lambda_{j,k}$ are the eigenvalues in $D(0,C)$ of $q_j^w(x,D_x)$ given in {\rm (\ref{sm6})}, repeated according to their algebraic multiplicity,  and $N_{j,k}$ is the dimension of the corresponding generalized eigenspace. (Possibly after changing $C>0$, we may assume that
$\abs{\lambda_{j,k}+p_1(X_j)}\neq C$ for all $k$, $1 \leq j \leq N$.)
\end{theo}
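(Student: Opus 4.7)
The plan is to combine the a priori estimate (\ref{eq1.6.7}) from \cite{HiPr2} with a localized Grushin problem construction near each doubly characteristic point, in the general spirit of \cite{HeSjSt}. The estimate (\ref{eq1.6.7}) already implies that the spectrum of $P$ in $D(0,Ch)$ is confined to arbitrarily small neighborhoods of the points $h(\lambda_{j,k}+p_1(X_j))$. Away from $\{X_1,\ldots,X_N\}$, the ellipticity (\ref{eq1.5}) together with $\Re p_0\geq 0$ allows a microlocal cutoff: the part of $P$ microlocalized away from the doubly characteristic set is invertible with ${\cal O}(1/h)$ norm, so that the spectrum in $D(0,Ch)$ is generated entirely by the contributions near each $X_j$. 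Near $X_j$, the operator $P-hz$ is, up to lower order and smoothing contributions, unitarily equivalent to $h(q_j^w(x,D_x)+p_1(X_j)-z)$ acting in the FBI/Bargmann realization used in \cite{HiPr2}, so that the spectral problem reduces to the analysis of small perturbations of $q_j^w$ with subprincipal constant $p_1(X_j)$.

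Second, fixing $(j,k)$ with $\lambda_{j,k}$ an eigenvalue of $q_j^w(x,D_x)$ of algebraic multiplicity $N_{j,k}$, I would set up a Grushin problem of the form
\[
{\cal P}(z)=\begin{pmatrix} h^{-1}(P-hz) & R_- \\ R_+ & 0 \end{pmatrix}: H(m)\times \comp^{N_{j,k}}\longrightarrow L^2(\real^n)\times \comp^{N_{j,k}},
\]
where $R_\pm$ are built from a Jordan basis of the generalized eigenspace of $q_j^w-\lambda_{j,k}$ and of its adjoint, transported to coherent states concentrated at $X_j$. Using the a priori estimate (\ref{eq1.6.7}) valid away from the spectrum of $q_j^w+p_1(X_j)$, together with the ellipticity properties of $q_j$ on $S_j$ and Theorem~1.2.2 of \cite{HiPr1} which identifies the spectrum (\ref{sm6}) of $q_j^w(x,D_x)$, one checks that ${\cal P}(z)$ is bijective with bounded inverse
\[
{\cal E}(z)=\begin{pmatrix} E(z) & E_+(z) \\ E_-(z) & E_{-+}(z) \end{pmatrix},
\]
uniformly for $z$ in a fixed small disc around $\lambda_{j,k}+p_1(X_j)$ and for $h>0$ small. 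The Schur complement formula gives that $hz\in\Spec(P)$ in that disc if and only if $\det E_{-+}(z;h)=0$.

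Third, I would extract the asymptotic expansion of the eigenvalues from $E_{-+}(z;h)$. Applying the pseudodifferential calculus and expanding the resolvent of the reduced local model in powers of $h$, the entries of $E_{-+}(z;h)$ admit full asymptotic expansions in integer powers of $h$, and the leading part is
\[
E_{-+}(z;h)=(\lambda_{j,k}+p_1(X_j)-z)I_{N_{j,k}}+J_{j,k}+{\cal O}(h),
\]
where $J_{j,k}$ is the nilpotent part of the Jordan block of $q_j^w$ at $\lambda_{j,k}$. Since a generic ${\cal O}(h)$ perturbation of a single Jordan block of size $N_{j,k}$ splits its eigenvalue into $N_{j,k}$ eigenvalues at the scale $h^{1/N_{j,k}}$, a Newton--Puiseux analysis of the equation $\det E_{-+}(z;h)=0$ yields precisely the expansions (\ref{eq1.9}).

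The main obstacle will be the third step: justifying the full asymptotic expansion of $E_{-+}(z;h)$ in integer powers of $h$ with holomorphic dependence in $z$ near $\lambda_{j,k}+p_1(X_j)$, uniformly in $h$, and then carrying out a clean Puiseux analysis of its determinant in the presence of a non-trivial Jordan structure. This requires a careful control of the remainders in the quadratic approximation (\ref{eq1.6.6}) and of the symbolic calculus near each $X_j$, and it is here that the singular space ellipticity hypothesis is essential, as it provides the invertibility of $q_j^w+p_1(X_j)-z$ on the orthogonal complement of the generalized eigenspace, which is what makes the Grushin problem well-posed in the first place.
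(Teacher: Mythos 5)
Your overall strategy---reduce to a Grushin problem near each doubly characteristic point and then carry out a Puiseux analysis of $\det E_{-+}(z;h)$---is exactly the route followed in the paper, which in turn is the route of~\cite{HeSjSt}. You also correctly identify the basis of coherent states $e_{j,h}(x)=h^{-n/2}e_j(x/\sqrt{h})$, $f_{j,h}(x)=h^{-n/2}f_j(x/\sqrt{h})$ built from the generalized eigenspaces of $q_j^w$ and its adjoint. However, several of the points you pass over quickly are in fact where the technical work lies, and one concrete claim is wrong.

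First, the well-posedness of the Grushin problem does not follow from~(\ref{eq1.6.7}) and formal ellipticity alone. The paper first reduces to $m=1$ (via $\widetilde{P}=(P+1)^{-1}P$, which is not cosmetic: it is needed to work in $S(1)$), then passes to the FBI--Bargmann side and works in the microlocally weighted spaces $H_{\Phi_{\delta,\varepsilon},h}(\comp^n)$ with the critical scaling $\varepsilon=Ah$, $A\gg 1$. The interior estimate (Lemma~\ref{lemma4.1}) lives in a ball $\abs{x}\lesssim \sqrt{\varepsilon}$, the exterior estimate~(\ref{eq4.18}) in its complement, and the two must be glued via the $A^{-\infty}$ gain. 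Crucially, this gluing requires the Gaussian decay of the generalized eigenvectors proved in Section~2 (Proposition~\ref{prop2.2}), which gives~(\ref{eq4.17}) and lets the exterior region ignore the $R_{-,h}$ contribution. Your proposal treats the singular-space ellipticity as providing invertibility of $q_j^w+p_1(X_j)-z$ on the complement of the eigenspace, which is true but is not the point where it is needed hardest: it is needed to get the exponential decay~(\ref{eq2.17.1}) and~(\ref{eq3.0}) of the generalized eigenfunctions, without which the interior/exterior patching does not close.

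Second, your claim that ``the entries of $E_{-+}(z;h)$ admit full asymptotic expansions in integer powers of $h$'' is not what actually holds. Because of the $\sqrt{h}$-rescaling built into $e_{j,h}$, $f_{j,h}$, the expansion of $E_{-+}(z;h)$ is in \emph{half-integer} powers of $h$ with a specific parity structure; it is only the determinant of $E_{-+}(z;h)$ that turns out to be a classical symbol of order zero, and this cancellation of half-integer powers is nontrivial and is taken from Section~11.5 of~\cite{HeSjSt}. Relatedly, the final step is not captured by the heuristic that ``a generic ${\cal O}(h)$ perturbation of a Jordan block of size $N_{j,k}$ splits at scale $h^{1/N_{j,k}}$'': the exponent $1/N_{j,k}$ in~(\ref{eq1.9}) arises as the potential denominator of the Puiseux series for the zeros of the classical symbol $\det E_{-+}$, not from a genericity assumption, and no splitting is asserted (the coefficients $\lambda_{j,k,l}$ may well vanish). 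So the strategy is right, but as written the proposal leaves the two genuinely hard steps---the global well-posedness via weighted spaces with decay, and the structure of the half-integer expansion of $E_{-+}$ and its determinant---essentially unproved.
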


\bigskip
\noindent
We now come to state the second main result of this work. In doing so, let us introduce the symbols
\begin{equation}
\label{hel1}
r_j(Y)=p_0(X_j+Y)-q_j(Y), \,\, 1 \leq j \leq N.
\end{equation}
We shall assume that there exists a closed angular sector $\Gamma$ with vertex at 0 and a neighborhood $V$ of the origin in $\real^{2n}$
such that for all $1 \leq j \leq N$,
\begin{equation}\label{re1}
r_j(V) \setminus \{0\} \subset \Gamma \setminus \{0\} \subset \{z \in \comp: \textrm{Re }z > 0\}.
\end{equation}

\begin{figure}[hhh]
\label{des1}
\caption{The range of $r_j$.}
\includegraphics[scale=0.7]{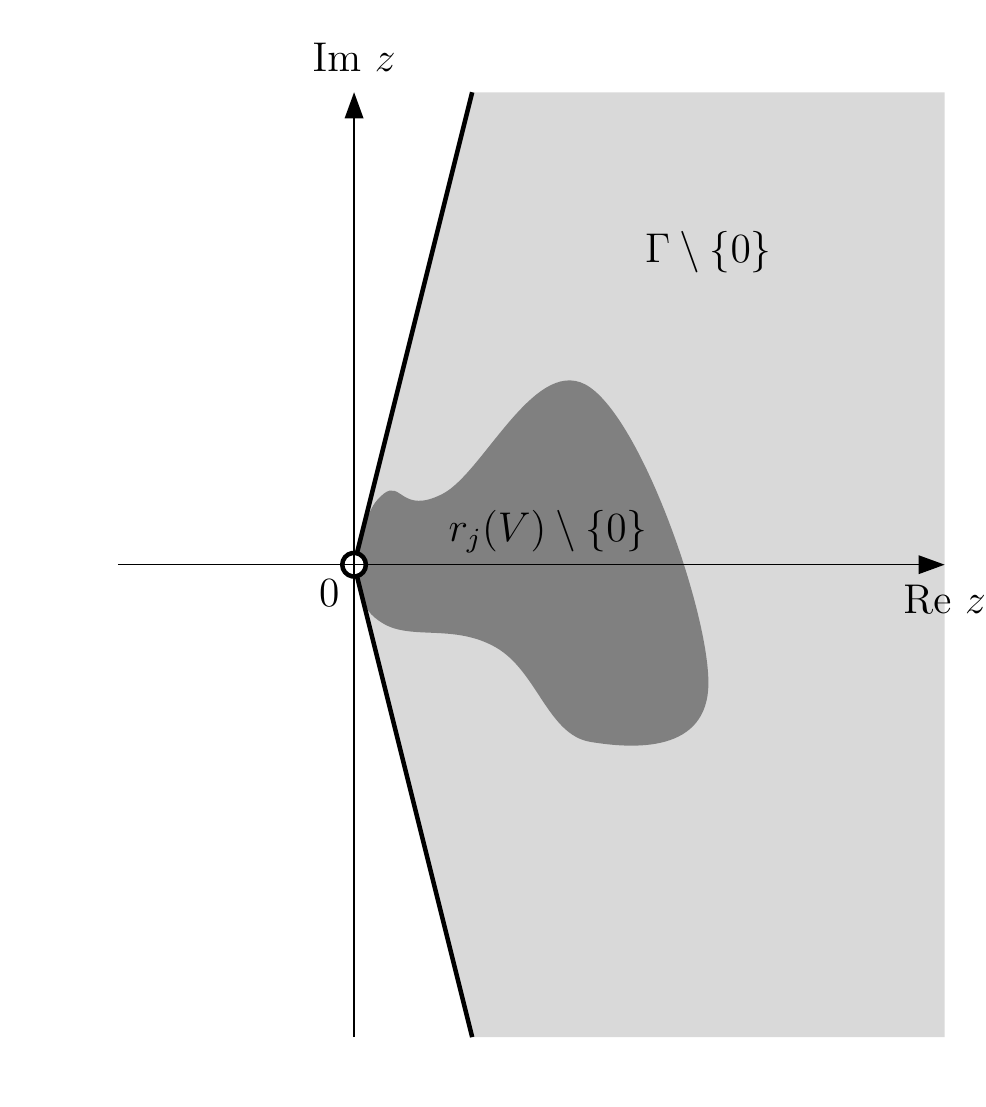}
\end{figure}
By denoting $F_j$ the Hamilton maps and $S_j$ the singular spaces associated to the quadratic forms $q_j$, we shall also assume that all the
singular spaces are reduced to zero,
\begin{equation}\label{ning2}
S_j=\{0\},
\end{equation}
when $1 \leq j \leq N$. According to the definition of the singular space (\ref{h1}), one can therefore consider the smallest integers,
$0 \leq k_j \leq 2n-1$, such that
\begin{equation}\label{ning1}
\Big(\bigcap_{l=0}^{k_j}\textrm{Ker}\big[\textrm{Re }F_j(\textrm{Im }F_j)^l \big]\Big) \bigcap \real^{2n}=\{0\}.
\end{equation}
Defining the integer
\begin{equation}\label{ning3}
k_0= \max_{j=1,...,N} k_j,
\end{equation}
in $\{0,...,2n-1\}$, we shall establish the following result:

\begin{theo}
\label{theo}
Consider a symbol $P(x,\xi;h)$ with a semiclassical expansion in the class $S(m)$ fulfilling the assumptions {\rm (\ref{eq1.4})},
{\rm (\ref{eq1.5})}, {\rm (\ref{eq1.6})}, {\rm (\ref{eq1.6.5})} and
{\rm (\ref{re1})}. When all the quadratic forms $q_j$, $1 \leq j \leq N$, defined in {\rm (\ref{eq1.6.6})} have zero singular spaces $S_j=\{0\}$,
then for any constant $C_0 > 0$ sufficiently small, there exist positive constants $0<h_0 \leq 1$, $C \geq 1$ and $c_0>0$ such that for all
$0<h \leq h_0$, $u \in \mathcal{S}(\real^n)$ and $z \in \Omega_{h}$,
\begin{equation}
\label{ning4}
 h^{\frac{2k_0}{2k_0+1}}|z|^{\frac{1}{2k_0+1}}\|u\|_{L^2} \leq c_0 \|Pu-zu\|_{L^2},
\end{equation}
where $P=P^w(x,hD_x;h)$, $k_0$ is the integer defined in {\rm(\ref{ning3})} and $\Omega_{h}$ denotes the set
\begin{equation}
\label{eq1.7}
\Omega_{h}=\Big\{z \in \comp: \emph{\textrm{Re }}z \leq \frac{1}{C} h^{\frac{2k_0}{2k_0+1}}|z|^{\frac{1}{2k_0+1}}, \ Ch \leq |z| \leq C_0  \Big\}.
\end{equation}
\end{theo}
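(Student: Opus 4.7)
The plan is to follow the strategy of \cite{HeSjSt}, reducing $P-z$ to a quadratic model near each doubly characteristic point and then invoking the subelliptic estimate (\ref{dl1}) for quadratic operators with zero singular space.

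\emph{Step 1 (localization).} By (\ref{eq1.5}) and the finiteness of $(\Re p_0)^{-1}(0)=\{X_1,\ldots,X_N\}$, the symbol $\Re p_0$ is bounded from below by a positive constant outside any fixed neighborhood of the $X_j$, and by $m/C$ at infinity. A G{\aa}rding/Fefferman--Phong inequality then yields, for every $u$ microlocalized away from $\bigcup_j V_j$ (with $V_j$ a small fixed neighborhood of $X_j$), the elliptic bound $\|u\|_{L^2}\leq \mathcal{O}(h^{-1})\|(P-z)u\|_{L^2}$, which for $z\in \Omega_h$ is strictly stronger than (\ref{ning4}). Using a pseudodifferential partition of unity, one reduces to the case where $u$ is microlocally concentrated in a single $V_j$.

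\emph{Step 2 (rescaling to the quadratic model).} On $V_j$ we have $p_0(X_j+Y)=q_j(Y)+r_j(Y)$ with $\Re r_j\geq 0$ by (\ref{re1}). Set $\zeta=z/h$, so that $C\leq |\zeta|\leq C_0/h$ and $\Re\zeta\leq |\zeta|^{1/(2k_0+1)}/C$. The standard unitary dilation $Y=h^{1/2}\widetilde Y$ turns $h^{-1}(P-z)$ into
\[
q_j^w(\widetilde x,D_{\widetilde x})+p_1(X_j)-\zeta+R_h,
\]
where $R_h=\mathcal{O}(h^{1/2})$ in a suitable symbol class; the positivity $\Re r_j\geq 0$ ensures, via a sharp G{\aa}rding/Wick inequality, that the cubic-and-higher part of $R_h$ is self-adjoint and non-negative modulo $\mathcal{O}(h^{1/2})$, and can only help.

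\emph{Step 3 (quadratic subelliptic resolvent estimate).} The core analytic input is the following consequence of (\ref{dl1}): for any quadratic $q$ with $\Re q\geq 0$ and $S=\{0\}$, and for all $\zeta$ with $|\zeta|\geq C$ and $\Re\zeta\leq |\zeta|^{1/(2k_0+1)}/C$,
\[
|\zeta|^{1/(2k_0+1)}\|v\|_{L^2}\leq c_0\|(q^w(x,D_x)-\zeta)v\|_{L^2},\quad v\in\mathcal{S}(\real^n).
\]
This is proved by a dyadic phase-space decomposition localizing to the level set $\{|q|\sim|\zeta|\}\sim\{|(x,\xi)|\sim|\zeta|^{1/2}\}$, on which the weight $\langle(x,\xi)\rangle^{2/(2k_0+1)}$ from (\ref{dl1}) has size $|\zeta|^{1/(2k_0+1)}$; the fact that $\sigma(q^w)\subset\{\Re w\geq c>0\}$, which follows from (\ref{sm6}) combined with $S=\{0\}$, guarantees that $\zeta$ stays uniformly away from the spectrum. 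Applying this bound with $q=q_j$, absorbing $R_h$ and the $p_1(X_j)$ term by a Neumann series, and gluing the microlocal pieces back together then delivers (\ref{ning4}).

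\emph{Main obstacle.} The heart of the argument is Step 3: the subelliptic resolvent bound for the quadratic model with the sharp exponent $1/(2k_0+1)$, uniformly for $\zeta$ ranging from moderate values up to $\mathcal{O}(1/h)$ while $\Re\zeta$ is allowed to approach $0$ at the critical rate. Reconciling the non-homogeneous weight $\langle(x,\xi)\rangle^{2/(2k_0+1)}$ of (\ref{dl1}) with the semiclassical dilation $\lambda=|\zeta|^{-1/2}$ requires careful bookkeeping of how the loss of derivatives is distributed across phase-space scales; the analogous step was carried out by hand for the Kramers--Fokker--Planck case ($k_0=1$) in \cite{HeSjSt}, and must now be made available for arbitrary $k_0$ via the quadratic analysis of \cite{Karel11}.
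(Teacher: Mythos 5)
Your outline captures the broad strategy of localizing to neighborhoods of the doubly characteristic points and invoking the subelliptic structure of the quadratic models, and you correctly identify the weight functions/subelliptic estimates from \cite{Karel11} as the key input. However, Steps~2 and~3 contain a genuine gap that the paper's actual proof is specifically designed to avoid.

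\emph{The cubic remainder is not a small perturbation.} After the unitary dilation $Y=h^{1/2}\widetilde Y$, the remainder becomes $h^{-1}r_j(h^{1/2}\widetilde Y)=\mathcal{O}(h^{1/2}|\widetilde Y|^3)$, which is \emph{not} $\mathcal{O}(h^{1/2})$ in any symbol class uniform on $\real^{2n}$: it grows cubically. The phase-space region where the quadratic subelliptic estimate actually bites is $|\widetilde Y|\sim|\zeta|^{1/2}$ with $\zeta=z/h$ ranging up to $C_0/h$, and there the remainder has size $\sim h^{1/2}|\zeta|^{3/2}=|z|^{3/2}/h$, which dwarfs the gain $|\zeta|^{1/(2k_0+1)}$ for every $k_0\geq 1$ once $|z|$ is not comparable to $h$. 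A Neumann-series absorption therefore fails exactly in the regime $Ch\leq|z|\leq C_0$ that Theorem~\ref{theo} is about. The remark that $\Re r_j\geq 0$ ``can only help'' is insufficient: non-negativity of $\Re r_j^w$ improves a lower bound on $\Re\big((p_0^w-z)u,u\big)$ but does not in general preserve a lower bound on $\|(q_j^w-\zeta)v\|$ under addition of a non-negative operator, which is what the Neumann series would require.

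\emph{What the paper does instead.} The paper avoids reducing to a pure quadratic resolvent estimate. Rather than treating $r_j$ perturbatively, it takes from Proposition~2.0.1 of \cite{Karel11} the \emph{weight functions} $g_j\in S(1,\langle X\rangle^{-2/(2k_0+1)}dX^2)$ satisfying the symbolic inequality $\Re q_j+c_{1,j}H_{\Im q_j}g_j+1\gtrsim\langle X\rangle^{2/(2k_0+1)}$, rescales and glues them into a global weight $g_h$, and then establishes the pointwise symbol inequality {\rm(\ref{msj20.5})} directly for the \emph{full} principal symbol $p_0$. Crucially, the sector hypothesis (\ref{re1}) is used at full strength, not merely as $\Re r_j\geq 0$: the inclusion $r_j(V)\setminus\{0\}\subset\Gamma$ gives $|\Im r_j|\leq c_5\,\Re r_j$, and via the square-root estimate $|\nabla f|\lesssim\sqrt{f}$ for non-negative $f$ this controls $\nabla\Im\tilde r_j$ (hence the Poisson bracket $H_{\Im\tilde r_j}g_{j,h}$) by $\sqrt{\Re\tilde r_j}$, allowing the cubic contribution to be absorbed into $(1+\mathcal{O}(1))\Re p_0$ rather than treated as an independent error. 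Your argument uses only $\Re r_j\geq 0$, which does not control $\Im r_j$ or its gradient. The estimate (\ref{msj20.5}) is then converted into an operator estimate by the multiplier method $\Re\big([p_0-z]^{\rm Wick}u,[2-g_h]^{\rm Wick}u\big)$ via the Wick composition formula (\ref{lay4}), and a second microlocalization at scale $\sqrt{|z|}$ handles the cutoff terms, exactly bridging the gap between the homogeneous subelliptic weight and the inhomogeneous semiclassical scaling that you flag as the ``main obstacle'' but do not resolve.

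In short: your Step~3 lemma (the quadratic resolvent estimate) is plausible but is not what the paper proves, and even granting it, the perturbative passage of Step~2 back to $P-z$ does not close because $r_j$ is large in the relevant region and you have not used the sector geometry of (\ref{re1}), which is the very hypothesis that makes the theorem hold.
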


\bigskip
\noindent
The set $\Omega_h$ defined in (\ref{eq1.7}) is represented on Figure 2. We may also notice that when $z\in \Omega_h$, then Theorem \ref{theo} implies that $z$ is in the resolvent set of $P$, and the resolvent estimate
$$
\left(P-z\right)^{-1}  = {\cal O}\left(h^{-\frac{2k_0}{2k_0+1}}\abs{z}^{-\frac{1}{2k_0+1}}\right): L^2(\real^n)\rightarrow L^2(\real^n)
$$
holds.

\begin{figure}[hhh]\label{des2}
\caption{Set $\Omega_h$.}
\includegraphics[scale=0.7]{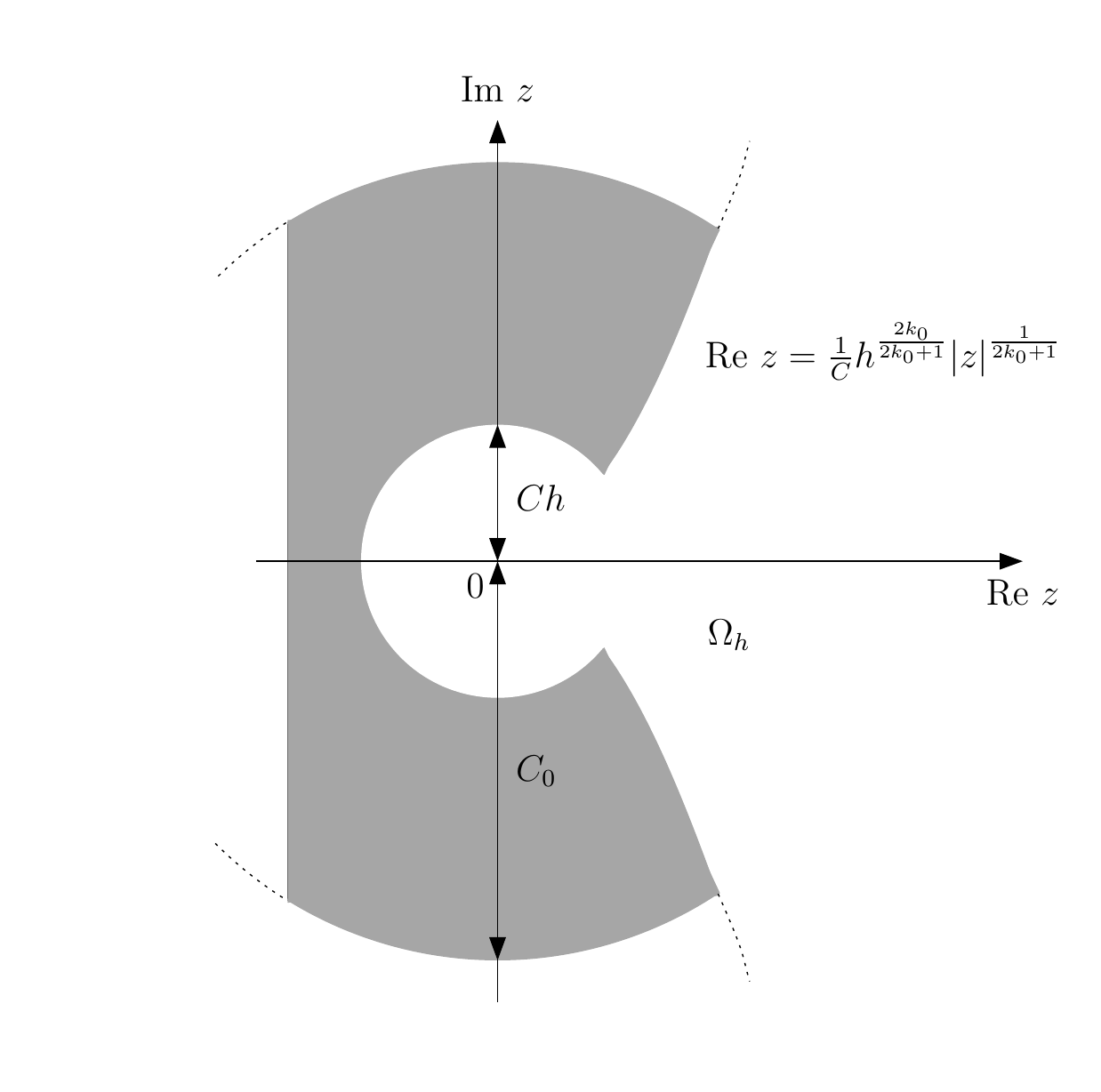}
\end{figure}

\medskip
\noindent
Notice that the quantity $h^{\frac{2k_0}{2k_0+1}}|z|^{\frac{1}{2k_0+1}}$, which appears in the estimate (\ref{ning4}), when $Ch \leq |z| \leq C_0$,
increases when the spectral parameter $z$ moves away from the origin at a rate, which depends on the maximal loss of derivatives $2k_0/(2k_0+1)$
appearing in the subelliptic estimates (\ref{dl1}), fulfilled by the quadratic approximations of the principal symbol at the doubly characteristic points. When the spectral parameter is of the order of magnitude of $h$, we recover the semiclassical hypoelliptic a priori estimate (\ref{eq1.6.7}), proved in~\cite{HiPr2}, with a loss of the full power of the semiclassical parameter. Theorem~\ref{theo} and Theorem~1 in~\cite{HiPr2},
together with the description of the spectrum of $P$, given in Theorem \ref{theo1}, give therefore an almost complete picture of the
spectral properties and the growth of the resolvent norm of a non-selfadjoint semiclassical
pseudodifferential operator with double characteristics fulfilling the assumptions of Theorems~\ref{theo} near the doubly characteristic set. These results underline the basic r\^ole played by the singular space in the analysis of the general structure of double characteristics.

\begin{figure}[hhh]\label{des3}
\caption{The estimate $h^{\frac{2k_0}{2k_0+1}}|z|^{\frac{1}{2k_0+1}}\|u\|_{L^2} \leq c\|Pu-zu\|_{L^2}$ is fulfilled when $z$
belongs to the dark grey region of the figure; whereas the estimate $h\|u\|_{L^2} \leq \|Pu-zu\|_{L^2}$ is fulfilled in the light grey one.}
\includegraphics[scale=0.7]{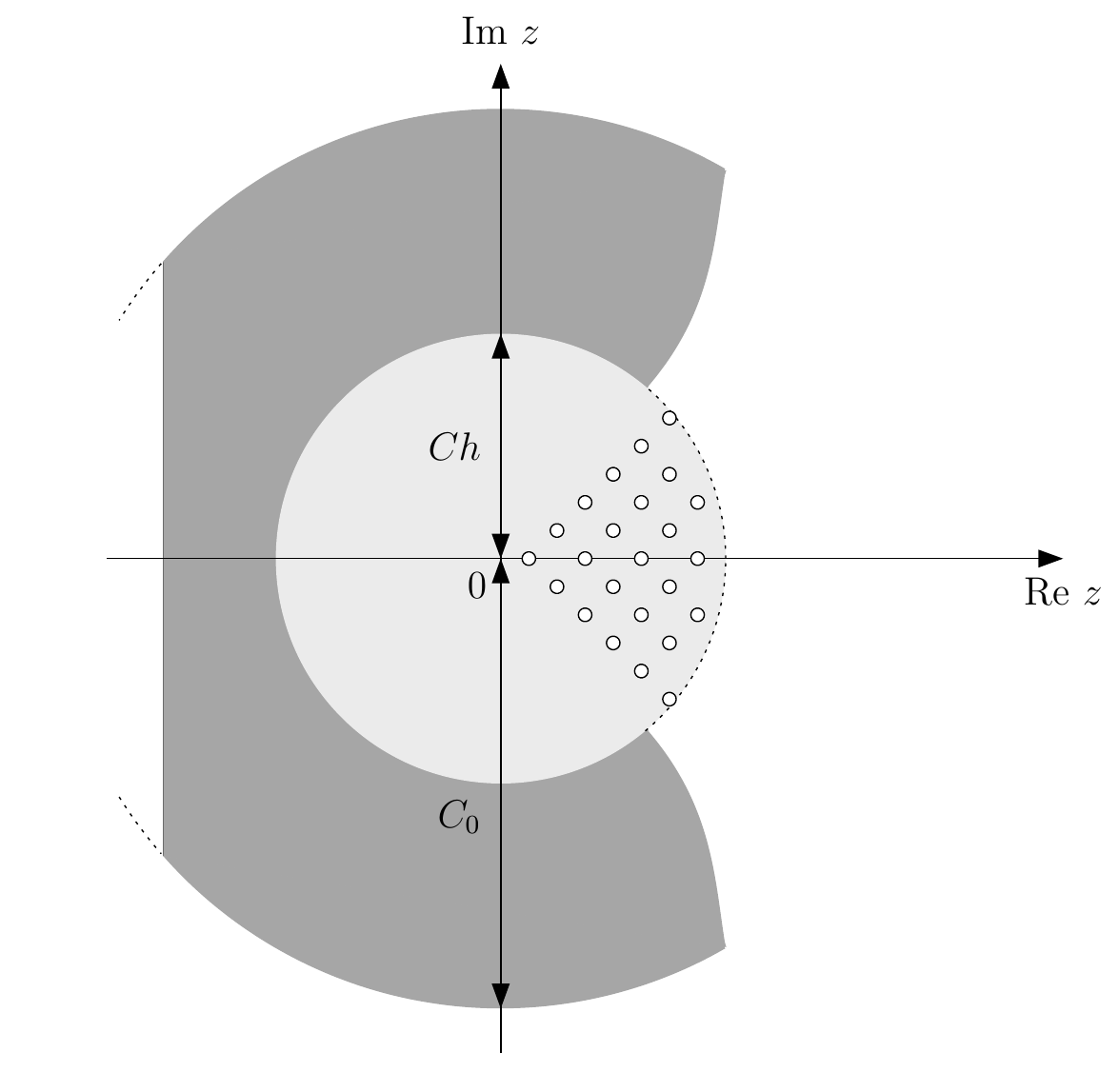}
\end{figure}

Coming back to Theorem~\ref{theo}, we would like to stress the fact that the non-negative integer $k_0$ defined in (\ref{ning3}),
$0 \leq k_0 \leq 2n-1$, measuring the maximal loss of derivatives $2k_0/(2k_0+1)$ appearing in the subelliptic estimates (\ref{dl1})
fulfilled by the quadratic approximations of the principal symbol at doubly characteristic points and the rate of growth of the
resolvent norm when the spectral parameter $z$ moves away from the origin in the estimate (\ref{ning4}); can actually
take any value in the set $\{0,...,2n-1\}$, when $n \geq 1$. Explicit local models for the quadratic approximations of the principal
symbol at doubly characteristic points for which the integer $k_0$ can take any value in the set $\{0,...,2n-1\}$ are given for example
by the following symbols:

\medskip

\begin{itemize}
\item[-] Case $k_0=0$: According to the definition of the Hamilton map, this is the case of any quadratic symbol $q$ with a positive definite real part $\textrm{Re }q > 0$.
\item[-] Case $k_0=1$: Consider a Fokker-Plank operator with a nondegenerate quad\-ra\-tic potential ten\-so\-ri\-zed with 
a harmonic oscillator in other symplectic variables
$$\xi_2^2+x_2^2+i(x_2 \xi_1-x_1 \xi_2)+\sum_{j=3}^n(\xi_j^2+x_j^2).$$
\item[-] Case $k_0=2p$, with $1 \leq p \leq n-1$: Consider
\begin{multline*}
\xi_1^2+x_1^2+i(\xi_1^2+2x_2\xi_1+\xi_2^2+2 x_3\xi_2+....+\xi_p^2+2x_{p+1}\xi_p+\xi_{p+1}^2)\\
+\sum_{j = p+2}^n(\xi_j^2+x_j^2).
\end{multline*}
\item[-] Case $k_0=2p+1$, with $1 \leq p \leq n-1$: Consider
$$x_1^2+i(\xi_1^2+2x_2\xi_1+\xi_2^2+2 x_3\xi_2+....+\xi_p^2+2x_{p+1}\xi_p+\xi_{p+1}^2)+\sum_{j = p+2}^n(\xi_j^2+x_j^2).$$
\end{itemize}
We refer the reader to \cite{Karel11} for more details concerning those examples.

\bigskip
\noindent
\textit{Remark.} The basic r\^ole played by conditions of subelliptic type for the understanding of resolvent estimates for
non-selfadjoint operators of principal type was first stressed in \cite{DeSjZw}. See also \cite{Karel06,mz} for specific cases.
These results were recently improved by W.~Bordeaux Montrieux in a model situation \cite{Bordeaux} and in the general case by
J.~Sj\"ostrand in \cite{Sj10}.

\medskip
\noindent
In~\cite{HeSjSt}, the authors obtain a result analogous to Theorem~\ref{theo1} and a resolvent estimate similar to (\ref{ning4}), in the case when
$k_0=1$. These results are obtained using assumptions of subelliptic type for the principal symbol of the operator, both locally near the doubly
characteristic points, and at infinity. Our analysis does not consider such a general situation where the ellipticity may fail both locally and at
infinity. The purpose of the present work, as well as of~\cite{HiPr2}, is to understand deeper the phenomena occurring near the doubly
characteristic set, and therefore we simplify parts of the analysis of~\cite{HeSjSt} by requiring a property of ellipticity at infinity
(\ref{eq1.5}) for the real part of the principal symbol $p_0$, whereas we weaken the assumptions of subelliptic type at the doubly
characteristic points. The assumption of subelliptic type for the principal symbol $p_0$ of the operator near a doubly characteristic point, say
here $X_0=0$,
$$
\exists \eps_0>0, \ \textrm{Re }p_0(X)+\eps_0H_{\textrm{Im}p_0}^2\textrm{Re }p_0(X) \sim |X|^2,
$$
made in~\cite{HeSjSt}, implies (See Section~4 in~\cite{HiPr2}) that the singular space $S$ associated to the quadratic approximation $q$
of the principal symbol $p_0$ at $X_0=0$ is reduced to $\{0\}$. More specifically, the singular space $S$ is equal to zero after the
intersection of exactly two kernels,
$$
S=\textrm{Ker}(\textrm{Re }F) \cap \textrm{Ker}\big[\textrm{Re }F(\textrm{Im }F)\big] \bigcap \real^{2n}=\{0\}.
$$
This explains why the integer $k_0$ is equal to $1$ in the case studied in~\cite{HeSjSt}.

\bigskip
\noindent
In the proof of Theorem \ref{theo1}, we rely upon the techniques developed in~\cite{HeSjSt},~\cite{HiPr1},~\cite{HiPr2}, and similarly
to~\cite{HeSjSt}, the proof proceeds by solving a globally well-posed Grushin problem for the operator $P$ in a suitable microlocally weighted
$L^2$--space, introduced in~\cite{HiPr2}. The main technical tool in the first part of the paper is therefore a systematic use of the
FBI--Bargmann transformation as well as of the associated weighted spaces of holomorphic functions.

\medskip
\noindent
The proof of Theorem~\ref{theo} uses elements of the Wick calculus, whose main features are recalled in the appendix (Section~\ref{Wick}).
This proof also depends crucially on the construction of weight functions performed in~\cite{Karel11} (Proposition~2.0.1) for the quadratic
approximations of the principal symbol at the doubly characteristic points. The method used in this proof, by starting with weights
built for quadratic symbols in order to deal with the general doubly characteristic case, largely accounts for the assumption (\ref{re1}).
We shall need this assumption in our proof of Theorem~\ref{theo}. Nevertheless, this hypothesis may be relevant only technically.

\medskip
\noindent
The plan of the paper is as follows. In Section 2, we study quadratic differential operators with quadratic symbols $q$, elliptic
along the associated singular spaces, and derive some Gaussian decay estimates for the generalized eigenfunctions, thereby
completing the corresponding discussion in~\cite{HiPr1}. This study is instrumental in Section 3, devoted to the construction of a globally
well-posed Grushin proof for the operator $P$ and to the proof of Theorem \ref{theo1}. Theorem \ref{theo} is established in Section 4.
As alluded to above, the proof makes use of some elements of the Wick calculus, and the relevant facts concerning those techniques
are reviewed in the appendix.

\medskip
\noindent
{\bf Acknowledgments}. The research of the first author is partially supported by the National Science Foundation under grant DMS-0653275 and by
the Alfred P. Sloan Research Fellowship. Part of this projet was conducted when the two authors visited
Universit\'e de Rennes in June of 2009. It is a great pleasure for them to thank Francis Nier for the invitation and for
the inspiring discussions. The authors are also very grateful to San V\~u Ng\d{o}c for the generous hospitality in Rennes. 

\section{Gaussian decay of eigenfunctions in the quad\-ra\-tic case}
\setcounter{equation}{0}
In this section we shall be concerned with a quadratic form $q$ on $\real^{2n}$ such that $\Re q\geq 0$ and with $q$ being elliptic along the
associated singular space $S$, introduced in (\ref{h1}). It follows then from~\cite{HiPr1} (Section~1.4.1) that the singular space
$S\subset \real^{2n}$ is symplectic. We have the following decomposition,
\begeq
\label{eq2.1}
\real^{2n}=S^{\sigma \perp}\oplus S,
\endeq
where $S^{\sigma \perp}$ is the orthogonal space of $S$ with respect to the symplectic form $\sigma$ in $\real^{2n}$, and let us
recall from~\cite{HiPr1} (Section~2) that we have linear symplectic coordinates
$(x',\xi')$ in $S^{\sigma \perp}$ and $(x'',\xi'')$ in $S$, respectively,
such that if
\begeq
\label{eq2.1.0}
X=(x,\xi)=(X';X'')=(x',\xi';x'',\xi'')\in \real^{2n}=\real^{2n'}\times \real^{2n''},
\endeq
then
\begeq
\label{eq2.2}
q(x,\xi)=q_1(x',\xi')+i q_2(x'',\xi''), \quad q_1  = q|_{S^{\sigma \perp}},\quad i q_2  = q|_S.
\endeq
We know furthermore from~\cite{HiPr1} (Proposition~2.0.1) that the symplectic coordinates may be chosen such that the elliptic quadratic form
$q_2$ satisfies
\begeq
\label{eq2.2.1}
q_2(x'',\xi'') = \eps_0 \sum_{j=1}^{n''} \frac{\lambda_j}{2} \left( {x''_j}^2 + {\xi''_j}^2\right),\quad \lambda_j>0, \quad
\eps_0 \in \{\pm 1\},
\endeq
while $q_1$ enjoys the following averaging property: for each $T > 0$, the quadratic form
\begeq
\label{eq2.3}
\langle{\Re q_1}\rangle_T(x',\xi')=\frac{1}{T}\int_{0}^{T} \Re q_1 \left(\exp(tH_{{\rm Im}\, q_1})(x',\xi')\right)\,dt
\endeq
is positive definite in $(x',\xi')$. In what follows, in order to fix the ideas, we take $\eps_0=1$ in (\ref{eq2.2.1}).

\medskip
\noindent
Following~\cite{HiPr2} (Section~2), let us introduce the quadratic weight function,
\begeq
\label{eq2.4}
G_0(X) = - \int J\left(-\frac{t}{T}\right) \Re q \left(\exp(t H_{{\rm Im}\, q})(X)\right)\,dt,\quad T> 0,
\endeq
where $J$ is a compactly supported piecewise affine function satisfying
$$
J'(t) = \delta(t) - 1_{[-1,0]}(t),
$$
and $1_{[-1,0]}$ the characteristic function of the set $[0,1]$.
It follows that
\begin{equation}
\label{eq2.5}
H_{{\rm Im}\, q} G_0 =\langle{\Re q}\rangle_{T,{\rm Im}\, q} -\Re q,
\end{equation}
where
$$
\langle \Re q \rangle_{T,{\rm Im}\, q}(X)=\frac{1}{T}\int_{0}^{T} \Re q (\exp(tH_{{\rm Im}\, q})(X))\,dt.
$$
From (\ref{eq2.2}) and (\ref{eq2.2.1}) we see that $G_0$ is a function of $X'$ only, so that $G_0 = G_0 (X')$, $X'=(x',\xi')\in \real^{2n'}$.
Following~\cite{HeSjSt} and~\cite{HiPr2}, we shall therefore consider an IR-deformation of the real phase space $S^{\sigma \perp}=\real^{2n'}$,
associated to the quadratic weight $G_0$, viewed as a function on $\real^{2n'}$. Let us set
\begeq
\label{eq2.6}
\Lambda_{\delta}  = \{X' + i\delta H_{G_0}(X');\, X'\in \real^{2n'}\}\subset \comp^{2n'},\quad 0\leq \delta \leq 1.
\endeq
We then know that for all $\delta>0$ small enough, $\Lambda_{\delta}$ is a linear IR-manifold, and, as explained for instance in~\cite{HeHiSj}
(Section~4), there exists a linear canonical transformation
\begeq
\label{eq2.7}
\kappa_{\delta}: \real^{2n'} \rightarrow \Lambda_{\delta},
\endeq
such that
\begeq
\label{eq2.8}
\kappa_{\delta}(X')  = X' + i\delta H_{G_0}(X') + {\cal O}(\delta^2 \abs{X'}).
\endeq

\medskip
\noindent
We introduce next the standard FBI-Bargmann transformation along $S^{\sigma \perp} \simeq \real^{2n'}$,
\begeq
\label{eq2.9}
T'u (x') = \widetilde{C} h^{-3n'/4} \int e^{\frac{i}{h}\varphi(x',y')} u(y')\,dy',\quad x'\in \comp^{n'},\quad \widetilde{C}>0,
\endeq
where $\varphi(x',y')=\frac{i}{2}(x'-y')^2$. Associated to $T'$ there is a complex linear canonical transformation
\begeq
\label{eq2.10}
\kappa_{T'}: \comp^{2n'} \ni (y',\eta')\mapsto (x',\xi')=(y'-i\eta',\eta')\in \comp^{2n'},
\endeq
mapping the real phase space $\real^{2n'}$ onto the linear IR-manifold
\begeq
\label{eq2.11}
\Lambda_{\Phi_0}=\Big\{ \Big(x',\frac{2}{i} \frac{\partial \Phi_0}{\partial x'}(x')\Big) : x' \in \comp^{n'}\Big\},
\endeq
where
$$\Phi_0(x') =\frac{1}{2} \left(\Im x'\right)^2.$$
For a suitable choice of $\widetilde{C}>0$ in (\ref{eq2.9}), we know that the map $T'$ takes $L^2(\real^{n'})$ unitarily onto $H_{\Phi_0,h}(\comp^{n'})$.
Here and in what follows, when $\Phi\in C^{\infty}(\comp^{n'})$ is a suitable smooth strictly plurisubharmonic weight function close to $\Phi_0$
in (\ref{eq2.11}), we shall let $H_{\Phi,h}(\comp^{n'})$ stand for the closed subspace of $L^2(\comp^{n'}; e^{-\frac{2\Phi}{h}}
L(dx'))$, consisting of functions that are entire holomorphic. The integration element $L(dx')$ stands here for the Lebesgue
measure on $\comp^{n'}$.

\medskip
\noindent
Following~\cite{HiPr2} (Section~3), we write next
\begin{equation}
\label{eq2.12}
\kappa_{T'}(\Lambda_{\delta})=\Lambda_{\Phi_{\delta}}:=\Big\{\Big(x',\frac{2}{i}\frac{\partial
\Phi_{\delta}}{\partial x'}(x')\Big) ; x'\in \comp^{n'}\Big\},
\end{equation}
for $0\leq \delta \leq \delta_0$  with $\delta_0>0$ small enough,
where $\Phi_{\delta}(x')$ is a strictly plurisubharmonic quadratic form on $\comp^{n'}$, given by
\begeq
\label{eq2.12.1}
\Phi_{\delta}(x') = {\rm v.c.}_{(y',\eta')\in {\bf C}^{n'}\times {\bf R}^{n'}} \left(-\Im \varphi(x',y') - (\Im y')\cdot \eta' + \delta G_0(\Re y', \eta')\right).
\endeq
The unique critical point $(y'(x'),\eta(x'))$ giving the corresponding critical value in (\ref{eq2.12.1}) satisfies
\begeq
\label{eq2.12.2}
y'(x') = \Re x' +{\cal O}(\delta\abs{x'}),\quad \eta'(x') = -\Im x' + {\cal O}(\delta \abs{x'}),
\endeq
and as in~\cite{HiPr1},~\cite{HiPr2}, we obtain that
\begin{equation}
\label{eq2.13}
\Phi_{\delta}(x')=\Phi_0(x')+\delta G_0(\Re x', -\Im x')+\mathcal{O}(\delta^2 \abs{x'}^2).
\end{equation}

\medskip
\noindent
Let us set $Q_1 =q_1^w(x',hD_{x'})$ and recall from~\cite{Sj95} the exact Egorov property
\begin{equation}
\label{2.14}
T' Q_1 u= \widetilde{Q}_1 T' u,\ u\in \mathcal{S}(\real^{n'}),
\end{equation}
where $\widetilde{Q}_1$ is a semiclassical quadratic differential operator on $\comp^{n'}$ whose Weyl symbol $\widetilde{q}_1$ satisfies
\begin{equation}
\label{2.15}
\widetilde{q}_1 \circ \kappa_{T'} = q_1 ,
\end{equation}
with $\kappa_{T'}$ given in (\ref{eq2.10}).

\medskip
\noindent
\medskip
Continuing to follow~\cite{Sj95}, let us also recall that when realizing $\widetilde{Q}_1$ as an unbounded operator on
$H_{\Phi_0,h}(\comp^{n'})$, we may first use the contour integral representation
$$
\widetilde{Q}_1 u(x') = \frac{1}{(2\pi h)^{n'}}\int\!\!\!\int_{\theta'=\frac{2}{i}\frac{\partial \Phi_0}{\partial
x'}\left(\frac{x'+y'}{2}\right)} e^{\frac{i}{h}(x'-y')\cdot \theta'}
\widetilde{q}_1\Big(\frac{x'+y'}{2},\theta'\Big)u(y')\,dy'\,d\theta',$$
and then, using that the symbol $\widetilde{q}_1$ is holomorphic, by a contour
deformation we obtain the following formula for $\widetilde{Q}_1$ as an unbounded operator on $H_{\Phi_0,h}(\comp^{n'})$,
\begin{equation}
\label{eq2.16}
\widetilde{Q}_1 u(x') = \frac{1}{(2\pi h)^{n'}}\int\!\!\!\int_{\theta'=\frac{2}{i}\frac{\partial \Phi_0}{\partial
x'}\left(\frac{x'+y'}{2}\right)+it\overline{(x'-y')}} e^{\frac{i}{h}(x'-y')\cdot \theta'}
\widetilde{q}_1\Big(\frac{x'+y'}{2},\theta'\Big)u(y')\,dy'\,d\theta',
\end{equation}
for any $t>0$. Furthermore, the operator $\widetilde{Q}_1$ can also be viewed as an unbounded operator
\begeq
\label{eq2.17.0}
\widetilde{Q}_1: H_{\Phi_{\delta},h}(\comp^{n'})\rightarrow H_{\Phi_{\delta},h}(\comp^{n'}),
\endeq
defined for $0<\delta\leq \delta_0$, with $\delta_0>0$ sufficiently small. Indeed, when defining the operator in (\ref{eq2.17.0}), it suffices to make
a contour deformation in (\ref{eq2.16}) and set
\begin{equation}
\label{eq2.17.001}
\widetilde{Q}_1 u(x') = \frac{1}{(2\pi h)^{n'}}\int\!\!\!\int_{\theta'=\frac{2}{i}\frac{\partial \Phi_{\delta}}{\partial
x'}\left(\frac{x'+y'}{2}\right)+it\overline{(x'-y')}} e^{\frac{i}{h}(x'-y')\cdot \theta'}
\widetilde{q}_1\Big(\frac{x'+y'}{2},\theta\Big)u(y')\,dy'\,d\theta',
\end{equation}
for any $t>0$.
We then know from the general theory~\cite{MelinSj}, \cite{Sj82}, that the operator in (\ref{eq2.17.0}) is unitarily equivalent to the quadratic
operator on $L^2(\real^{n'})$, whose Weyl symbol is given by the quadratic form
\begeq
\label{eq2.17.01}
X' \mapsto  q_1\left(\kappa_{\delta}(X')\right),\quad X'\in \real^{2n'},
\endeq
with $\kappa_{\delta}$ introduced in (\ref{eq2.7}), (\ref{eq2.8}). In particular, using (\ref{eq2.3}), (\ref{eq2.5}), and (\ref{eq2.8}), we see as
in \cite{HiPr1} (p.827) that the real part of the quadratic form in (\ref{eq2.17.01}) is positive definite, and from~\cite{HiPr1} (p.828) we also
know that the spectrum of $\widetilde{Q}_1$ acting on $H_{\Phi_0,h}(\comp^{n'})$ agrees with the spectrum of $\widetilde{Q}_1$ acting on
$H_{\Phi_{\delta},h}(\comp^{n'})$, for all $0<\delta\leq \delta_0$,
$\delta_0>0$ small enough, including the multiplicities. For future reference, let us recall from~\cite{HiPr1} the explicit description of the
spectrum of $\widetilde{Q}_1$, which is given by
\begeq
\label{eq2.17.02}
{\rm Spec}(\widetilde{Q}_1) =
\left\{ h \mathop{\sum_{\lambda \in \sigma(F_1)}}_{{\rm Im} \lambda > 0}
(r_\lambda + 2 k_\lambda)\frac{\lambda}{i},\, k_\lambda \in \nat \right\}.
\endeq
Here, $F_1$ is the Hamilton map associated to the quadratic form $q_1$ and $r_\lambda$ is the dimension of the generalized eigenspace of $F_1$ in
$\comp^{2n'}$ corresponding to the eigenvalue $\lambda\in \comp$ of the Hamilton map $F_1$.

\medskip
\noindent
In the remainder of this section, we shall be concerned exclusively with the case of $(h=1)$ quantization, and we shall then write
$H_{\Phi_0}(\comp^{n'})=H_{\Phi_0,h=1}(\comp^{n'})$, and similarly for $H_{\Phi_{\delta}}(\comp^{n'})$. The following result is a slight
generalization of the corresponding statement from~\cite{HiPr1}.
\begin{prop}
\label{prop2.1}
There exists $\eta>0$ and $\delta_0>0$ small enough, such that the generalized eigenvectors $u$ of the operators
$$
\widetilde{Q}_1(x',D_{x'}): H_{\Phi_0}(\comp^{n'}) \rightarrow H_{\Phi_0}(\comp^{n'})
$$
and
$$
\widetilde{Q}_1(x',D_{x'}): H_{\Phi_\delta}(\comp^{n'}) \rightarrow H_{\Phi_\delta}(\comp^{n'}),\quad 0<\delta \leq \delta_0,
$$
agree and satisfy
\begeq
\label{eq2.17.1}
u\in H_{\Phi_0-\eta \abs{x'}^2}(\comp^{n'}).
\endeq
\end{prop}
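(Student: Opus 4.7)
The strategy is to exploit the unitary equivalence, via a metaplectic operator associated to the canonical transformation $\kappa_\delta$, between $\widetilde{Q}_1$ acting on $H_{\Phi_\delta}(\comp^{n'})$ and the quadratic differential operator $R_\delta := (q_1 \circ \kappa_\delta)^w(x', D_{x'})$ acting on $L^2(\real^{n'})$. As recorded in (\ref{eq2.17.01}) and the discussion immediately following, for every $0 < \delta \leq \delta_0$ the real part of the Weyl symbol $q_1 \circ \kappa_\delta$ is positive definite, which makes $R_\delta$ an elliptic quadratic operator. By the standard theory of such operators, its generalized eigenvectors lie in $\mathcal{S}(\real^{n'})$, and are in fact finite linear combinations of Hermite-type functions of the form $p(y') e^{-\frac{1}{2}\langle A y', y'\rangle}$ with $p$ a polynomial and $A$ a symmetric complex matrix whose real part is positive definite.

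Pulling such eigenvectors back through the intertwining metaplectic transformation produces holomorphic functions $u$ on $\comp^{n'}$ of the form $u(x') = \tilde{p}(x') e^{-\frac{1}{2}\langle B_\delta x', x'\rangle}$, where $\tilde{p}$ is a polynomial and $B_\delta$ is a complex symmetric matrix depending on $\delta$. The key quantitative statement I would extract from the explicit Gaussian form of the intertwiner---essentially an FBI-type transform associated to the real canonical map $\kappa_\delta$---is a uniform strict quadratic lower bound
$$
\Re \langle B_\delta x', x' \rangle - 2 \Phi_\delta(x') \geq c_0 |x'|^2, \qquad x' \in \comp^{n'},
$$
for some $c_0 > 0$ independent of $\delta \in (0, \delta_0]$. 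Conceptually this records the fact that the Gaussian decay of any $L^2$-eigenvector of $R_\delta$ is \emph{strictly} stronger than what is needed for mere membership in $H_{\Phi_\delta}$, the gap being controlled by the positive-definiteness of $\Re(q_1 \circ \kappa_\delta)$.

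To finish, I would compare the weights $\Phi_\delta$ and $\Phi_0$ through the expansion (\ref{eq2.13}), which gives $|\Phi_\delta(x') - \Phi_0(x')| \leq C \delta |x'|^2$. Combining with the previous estimate, for $\delta$ sufficiently small one obtains $\Re \langle B_\delta x', x' \rangle \geq 2 \Phi_0(x') + 2 \eta |x'|^2$ for some $\eta > 0$, which yields $u \in H_{\Phi_0 - \eta |x'|^2}(\comp^{n'}) \subset H_{\Phi_0}(\comp^{n'})$. Since the spectra of $\widetilde{Q}_1$ on $H_{\Phi_0}$ and on $H_{\Phi_\delta}$ coincide together with algebraic multiplicities (as recalled just before the proposition), a dimension count then forces the corresponding generalized eigenspaces to be literally equal as spaces of holomorphic functions on $\comp^{n'}$.

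The main technical obstacle is establishing the uniform quadratic bound in the second paragraph intrinsically from the ellipticity of $\Re(q_1 \circ \kappa_\delta)$: this requires a careful analysis of the Gaussian integral kernel of the metaplectic intertwiner and of the $\delta$-dependence of the matrix $B_\delta$. Once it is in hand, the remaining comparison of weights is routine, since the quadratic form $G_0$ appearing in (\ref{eq2.4}) is a fixed real quadratic form on $\real^{2n'}$ bounded by $|X'|^2$.
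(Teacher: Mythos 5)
Your approach is genuinely different from the paper's, but it has a gap that you yourself flag, and the gap can be closed more cheaply than you suggest.

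The paper's proof uses the heat semigroup $e^{-t\widetilde{Q}_1}$ and the smoothing estimate of Lemma~3.1.2 in~\cite{HiPr1}, according to which $e^{-t\widetilde{Q}_1}$ maps $H_{\Phi_0}(\comp^{n'})$ boundedly into $H_{\Phi_t}(\comp^{n'})$ with $\Phi_t \leq \Phi_0 - \alpha(t)\abs{x'}^2$. Since $\widetilde{Q}_1$ restricted to a finite-dimensional generalized eigenspace $E_{\lambda_0}$ is bounded, the restricted semigroup is the exponential of a bounded operator and hence bijective on $E_{\lambda_0}$; writing $u = e^{-t\widetilde{Q}_1}v$ with $v\in E_{\lambda_0}$ then places $u$ in $H_{\Phi_t}$ directly, without invoking the explicit polynomial-times-Gaussian description of the generalized eigenfunctions. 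You instead conjugate to $R_\delta = (q_1\circ\kappa_\delta)^w$ on $L^2(\real^{n'})$, appeal to the Hermite-type structure of eigenfunctions of quadratic operators with positive-definite real part, transport this through the metaplectic intertwiner to obtain $u = \tilde{p}(x')\exp(-\frac{1}{2}\langle B_\delta x',x'\rangle)$, and then compare quadratic forms. That route is viable (the paper itself records the polynomial-times-Gaussian form in the Remark after Proposition~\ref{prop2.2}, citing~\cite{sjostrand},~\cite{HeSjSt},~\cite{HeHiSj}), but it deploys heavier machinery than the paper's semigroup argument.

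The acknowledged gap is the lower bound $\Re\langle B_\delta x',x'\rangle - 2\Phi_\delta(x') \geq c_0\abs{x'}^2$, which you frame as requiring a careful analysis of the intertwiner kernel and a uniform-in-$\delta$ estimate. Two observations soften this considerably, and you should incorporate them. First, you do not need uniformity in $\delta$: it suffices to establish the bound for a single fixed $\delta_1\in(0,\delta_0]$, because once the corresponding generalized eigenspace is shown to lie in $H_{\Phi_0-\eta\abs{x'}^2}(\comp^{n'})$, it is automatically contained in $H_{\Phi_\delta}(\comp^{n'})$ for every $0<\delta\leq\delta_0$ with $\delta_0$ small (using $\abs{\Phi_\delta - \Phi_0}\leq C\delta\abs{x'}^2$), and the coincidence of spectra with multiplicities on the different spaces then forces the generalized eigenspaces to agree by dimension count, including with the one on $H_{\Phi_0}$. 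Second, for a fixed $\delta$ the strict positivity of the quadratic form $\Re\langle B_\delta x',x'\rangle - 2\Phi_\delta$ does not require any intertwiner computation: the ground state $x'\mapsto\exp(-\frac{1}{2}\langle B_\delta x',x'\rangle)$ (with $\tilde{p}\equiv 1$) lies in $H_{\Phi_\delta}(\comp^{n'})$, and a pure Gaussian is square-integrable against $e^{-2\Phi_\delta}$ if and only if the real quadratic form $\Re\langle B_\delta x',x'\rangle - 2\Phi_\delta$ on $\real^{2n'}$ is positive definite. Since $B_\delta$ is the same for all eigenvalues (the Gaussian factor is fixed by the stable Lagrangian), this yields the needed $c_{\delta_1}>0$ at once. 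With these two points your proof closes, but as written it stops short, whereas the paper's semigroup argument does not rely on the explicit eigenfunction structure at all.
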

\begin{proof}
The statement of the proposition was established in the work~\cite{HiPr1}, in the case when $u$ is an eigenvector of $\widetilde{Q}_1$.
When treating the case of generalized eigenvectors, we may argue in a way similar to~\cite{HiPr1} (p.829-831), and consider the restriction of the
heat semigroup, viewed as a bounded operator,
\begeq
\label{eq2.18}
e^{-t\widetilde{Q}_1} : H_{\Phi_0}(\comp^{n'}) \rightarrow H_{\Phi_t}(\comp^{n'}),\quad 0 < t\leq t_0,
\endeq
$t_0>0$ small enough, to a generalized eigenspace $E_{\lambda_0}\subset H_{\Phi_0}(\comp^{n'})$ of $\widetilde{Q}_1$, associated to an eigenvalue
$\lambda_0$. The space $E_{\lambda_0}$ is finite-dimensional, and the restriction of $\widetilde{Q}_1-\lambda_0$ to $E_{\lambda_0}$ is nilpotent.
It was shown in \cite{HiPr1} (Lemma~3.1.2) that for each $t>0$ small enough, there exists $\alpha=\alpha(t)>0$ such that the quadratic form
$\Phi_t$ satisfies
$$
\Phi_t(x') \leq \Phi_0(x') - \alpha \abs{x'}^2,\quad x'\in \comp^{n'}.
$$
Notice that the map $e^{-t\widetilde{Q}_1}: E_{\lambda_0} \rightarrow E_{\lambda_0}$ is bijective for any $t\geq 0$.
Indeed, the generalized eigenspace $E_{\lambda_0}$ is stable under the action of the operator $\widetilde{Q}_1$ and its restriction to this
finite-dimensional space
$$
\widetilde{Q}_1|_{E_{\lambda_0}} : E_{\lambda_0} \rightarrow E_{\lambda_0},
$$
is a bounded operator. This implies that the restriction of the semigroup to the space $(e^{-t\widetilde{Q}_1})|_{E_{\lambda_0}}$ coincides with
the exponential of the bounded operator $-t\widetilde{Q}_1|_{E_{\lambda_0}}$, which is always bijective.
It follows therefore that the generalized eigenvectors $u\in H_{\Phi_0}(\comp^{n'})$ of $\widetilde{Q}_1$ acting on $H_{\Phi_0}(\comp^{n'})$, belong to
$H_{\Phi_{\delta}}(\comp^{n'})$, for $\delta>0$ small enough, and satisfy (\ref{eq2.17.1}). Considering the action of the heat semigroup on
the corresponding generalized eigenspace of the operator $\widetilde{Q}_1$ acting on $H_{\Phi_{\delta}}(\comp^{n'})$ and repeating the arguments
following the statement of Lemma 3.1.2 in~\cite{HiPr1}, we obtain the statement of the proposition.
\end{proof}

\medskip
\noindent
Having obtained the exponential decay properties of the generalized eigenvectors of $\widetilde{Q}_1$, we return to the full quadratic operator
$Q = q^w(x,D_x)$ in (\ref{eq2.2}), and introduce the corresponding quadratic differential operator $\widetilde{Q}$ on the FBI transform side, given by
$$
T Q u = \widetilde{Q} Tu,\quad u\in {\cal S}(\real^n).
$$
Here the full FBI-Bargmann transformation $T$ is given by
$$
T = T' \otimes T'': L^2(\real^n) = L^2(\real^{n'})\otimes L^2(\real^{n''})\rightarrow H_{\Phi_0}(\comp^{n'})\otimes H_{\Phi_0}(\comp^{n''}) = H_{\Phi_0}(\comp^n),
$$
with the partial transform $T^{''}$ along the singular space $S$ being defined similarly to (\ref{eq2.9}). Associated to $T''$ and to $T$, we have the linear canonical transformations $\kappa_{T''}$ and $\kappa_T$, with $\kappa_T = \kappa_{T'}\otimes \kappa_{T''}$, so that $\kappa_T(y,\eta) = (y-i\eta,\eta)$. The splitting of the coordinates (\ref{eq2.1.0}) induces, by means of $\kappa_T$, the corresponding splitting of the coordinates in $\comp^n$, so that we can write $x=(x',x'')\in \comp^n = \comp^{n'}\times \comp^{n''}$. We have, in view of (\ref{eq2.2}),
\begeq
\label{eq2.19}
\widetilde{Q}(x,D_x) = \widetilde{Q}_1(x',D_{x'}) + i \widetilde{Q}_2(x'',D_{x''}),
\endeq
where the symbol $\widetilde{q}_2$ of the quadratic operator $\widetilde{Q}_2(x'',D_{x''})$ is given by $\widetilde{q}_2 = q_2 \circ \kappa_{T''}^{-1}$.

\medskip
\noindent
We shall be concerned with the generalized eigenfunctions of the operator $\widetilde{Q}(x,D_x)$ in (\ref{eq2.19}) acting on the weighted space
$$
H_{\Phi_{\delta}}(\comp^{n}) = H_{\Phi_{\delta}}(\comp^{n'})\otimes H_{\Phi_0}(\comp^{n''}),
$$
with $\delta>0$ small enough fixed. Here in the left hand side,
$$
\Phi_{\delta}(x)=\Phi_{\delta}(x')+\Phi_0(x''),
$$
and an application of (\ref{eq2.11}) and (\ref{eq2.13}) shows that
$$
\Phi_{\delta}(x) = \Phi_0(x) + \delta G_0(\Re x', -\Im x') + {\cal O}(\delta^2 \abs{x'}^2).
$$

\medskip
\noindent
Let us recall from~\cite{HiPr1} (p.843) that the spectrum of $\widetilde{Q}(x,D_x)$ is given by
$$
\sigma(\widetilde{Q}(x,D_x)) = \sigma(\widetilde{Q}_1(x',D_{x'})) + i\sigma(\widetilde{Q}_2(x'',D_{x''})),
$$
with the spectrum of $\widetilde{Q}_1(x',D_{x'})$ given in (\ref{eq2.17.02}),
and furthermore, from (\ref{eq2.2.1}), we know that the spectrum of $\widetilde{Q}_2(x'',D_{x''})$ consists of the eigenvalues of the form
$$
\mu_{\alpha''}  = \sum_{j=1}^{n''} \frac{\lambda_j}{2}\left(2\alpha_j''+1\right),\quad \alpha''\in \nat^{n''}.
$$
The corresponding eigenfunctions are given by
\begeq
\label{eq2.20}
\Phi_{\alpha''}(x'') = (T''\varphi_{\alpha''})(x''),
\endeq
where
$$
\varphi_{\alpha''}(y'') = H_{\alpha''}(y'') e^{-(y'')^2/2}
$$
are the Hermite functions, with $H_{\alpha''}(y'')$ being the Hermite polynomials on $\real^{n''}$. It is clear that the eigenfunctions
$\Phi_{\alpha''}(x'')$ form an orthonormal basis of $H_{\Phi_0}(\comp^{n''})$, and a straightforward computation shows that the functions
$\Phi_{\alpha''}(x'')$ are of the form
$$
\Phi_{\alpha''}(x'') = p_{\alpha''}(x'') e^{-(x'')^2/4},
$$
where $p_{\alpha''}(x'')$ is a holomorphic polynomial on $\comp^{n''}$. In particular, we have
\begeq
\label{eq2.21}
\Phi_{\alpha''} \in H_{\Phi_0-\eta \abs{x''}^2}(\comp^{n''}),
\endeq
for some fixed $\eta>0$.

\medskip
\noindent
Let $u\in H_{\Phi_{\delta}}(\comp^n)$, and let us write
$$
u(x',x'') = \sum_{\alpha''\in {\bf N}^{n''}} u_{\alpha''}(x') \Phi_{\alpha''}(x'').
$$
Using that
\begeq
\left(\widetilde{Q}(x,D_x)-\lambda\right) u = \sum_{\alpha''\in {\bf N}^{n''}} \left[(\widetilde{Q}_1(x',D_{x'})+i\mu_{\alpha''}-\lambda)u_{\alpha''}(x')\right]\Phi_{\alpha''}(x''),
\endeq
we see that $u$ is a generalized eigenvector of $\widetilde{Q}(x,D_x)$ corresponding to an eigenvalue $\lambda\in \comp$, precisely when
$u$ is of the form
\begeq
\label{eq2.22}
u(x',x'') = \sum_{\alpha''} u_{\alpha''}(x')\Phi_{\alpha''}(x''),
\endeq
where the summation extends over all $\alpha''\in \nat^{n''}$ for which
$$
\lambda-i\mu_{\alpha''}\in \sigma(\widetilde{Q}_1(x',D_{x'})),
$$
and $u_{\alpha''}(x')\in H_{\Phi_{\delta}}(\comp^{n'})$ is a generalized eigenvector of $\widetilde{Q}_1(x',D_{x'})$ associated to the eigenvalue $\lambda-i\mu_{\alpha''}$. Since, according to (\ref{eq2.17.02}), $\sigma(\widetilde{Q}_1)$ is contained in a proper closed cone in $\comp$ of the form
$\abs{\Im z}\leq C\Re z$, $C>0$, it follows that the sum in (\ref{eq2.22}) contains a fixed finite number of terms, when $\abs{\lambda} = {\cal O}(1)$.
Combining Proposition 2.1, (\ref{eq2.21}), and (\ref{eq2.22}), we obtain the following result, which summarizes the discussion pursued in this section.

\begin{prop}
\label{prop2.2}
There exists $\eta>0$ such that for all $0\leq \delta\leq \delta_0$, with $\delta_0>0$ small enough, the generalized eigenvectors $u$ of the quadratic
operator $\widetilde{Q}(x,D_x)$ acting on $H_{\Phi_{\delta}}(\comp^n)$, satisfy
$$
u \in H_{\Phi_0-\eta \abs{x}^2} (\comp^n).
$$
\end{prop}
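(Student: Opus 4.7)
The plan is to assemble Proposition \ref{prop2.2} as a straightforward synthesis of the three ingredients developed immediately above it in the section: the tensor decomposition (\ref{eq2.19}) of $\widetilde{Q}$, the Gaussian decay of eigenfunctions $\Phi_{\alpha''}$ of $\widetilde{Q}_2$ recorded in (\ref{eq2.21}), and the exponential decay of generalized eigenvectors of $\widetilde{Q}_1$ supplied by Proposition \ref{prop2.1}. There is essentially no new analytic input; what must be checked is that the partial Fourier-type expansion in the $\Phi_{\alpha''}$ basis of the $x''$--variable interacts correctly with the weighted spaces.

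First, I would fix a generalized eigenvalue $\lambda\in \comp$ of $\widetilde{Q}(x,D_x)$ acting on $H_{\Phi_{\delta}}(\comp^{n})$ and a corresponding generalized eigenvector $u$. Expanding $u$ in the orthonormal basis $\{\Phi_{\alpha''}\}_{\alpha''\in \nat^{n''}}$ of $H_{\Phi_0}(\comp^{n''})$ as in (\ref{eq2.22}), the splitting (\ref{eq2.19}) together with the commutation $[\widetilde{Q}_1,\widetilde{Q}_2]=0$ forces each coefficient $u_{\alpha''}(x')\in H_{\Phi_{\delta}}(\comp^{n'})$ to be a generalized eigenvector of $\widetilde{Q}_1(x',D_{x'})$ associated to the eigenvalue $\lambda - i\mu_{\alpha''}$. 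The key structural observation, already highlighted in the paragraph preceding the statement, is that since $\sigma(\widetilde{Q}_1)$ lies in a proper closed cone $\{z\in\comp : |\Im z|\leq C\Re z\}$ and $\mu_{\alpha''}\to +\infty$ with $|\alpha''|$, only finitely many indices $\alpha''$ contribute to the expansion of $u$. So the sum (\ref{eq2.22}) has only finitely many terms.

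Next, I would apply Proposition \ref{prop2.1} to each nonzero $u_{\alpha''}(x')$ to obtain some $\eta_1>0$, independent of $\alpha''$ (the argument there only uses the linear IR-deformation and the geometry of the weight $\Phi_{\delta}$, not the particular eigenvalue), with $u_{\alpha''}\in H_{\Phi_0-\eta_1|x'|^2}(\comp^{n'})$. In parallel, (\ref{eq2.21}) supplies a fixed $\eta_2>0$ with $\Phi_{\alpha''}\in H_{\Phi_0-\eta_2|x''|^2}(\comp^{n''})$, since each $\Phi_{\alpha''}(x'')$ is a polynomial multiple of $e^{-(x'')^2/4}$. Multiplying these bounds and summing the finitely many terms, one concludes that
\[
u(x',x'') \in H_{\Phi_0(x')+\Phi_0(x'') - \eta(|x'|^2+|x''|^2)}(\comp^n) = H_{\Phi_0 - \eta |x|^2}(\comp^n),
\]
for $\eta=\min(\eta_1,\eta_2)>0$, which is the desired conclusion.

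The only mildly delicate point, and the step I would check with most care, is the uniformity of the constant $\eta_1$ in Proposition \ref{prop2.1} across the finitely many relevant indices $\alpha''$: the proof of Proposition \ref{prop2.1} via the heat semigroup bound $\Phi_t\leq \Phi_0 - \alpha|x'|^2$ gives an $\eta$ depending only on the deformation parameter $t$ and on the operator $\widetilde{Q}_1$, not on the particular generalized eigenspace, so this uniformity is automatic. Apart from this, the argument is essentially a bookkeeping consequence of the tensorization of the problem afforded by the symplectic splitting (\ref{eq2.1})--(\ref{eq2.2}) and the explicit Hermite diagonalization of the elliptic block $\widetilde{Q}_2$.
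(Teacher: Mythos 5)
Your proposal is correct and matches the paper's argument essentially line for line: the paper likewise expands $u$ in the Hermite basis $\{\Phi_{\alpha''}\}$ via (\ref{eq2.22}), uses the cone condition on $\sigma(\widetilde{Q}_1)$ from (\ref{eq2.17.02}) to reduce to a finite sum, and then combines Proposition \ref{prop2.1} with the decay (\ref{eq2.21}) of the $\Phi_{\alpha''}$. The uniformity of $\eta_1$ you flag is indeed built into Proposition \ref{prop2.1} and is, as you note, rendered moot by the finiteness of the sum.
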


\medskip
\noindent
{\it Remark}. The discussion in this section, together with the corresponding analysis in Section 3 in~\cite{HiPr1}, can be considered as a natural
generalization of Remark 11.7 in~\cite{HeSjSt}. For future reference, let us also remark that from~\cite{HeSjSt},~\cite{HeHiSj},~\cite{sjostrand},
we know that the generalized eigenfunctions $u$ of the operator
$\widetilde{Q}(x,D_x)$ are such that the inverse FBI transform $T^{-1}u \in L^2(\real^n)$ is of the form
\begeq
\label{eq2.23}
T^{-1} u = p(x) e^{i\Phi(x)},
\endeq
where $p$ is a polynomial on $\real^n$ and $\Phi(x)$ is a complex quadratic form, and according to Proposition \ref{prop2.2}), we have $\Im \Phi>0$.
Furthermore, the positive Lagrangian subspace $\{(x,\Phi'(x));\, x\in \comp^n\}$ is the stable outgoing manifold for the Hamilton flow of the
quadratic form
$$
(x,\xi) \mapsto \frac{1}{i} e^{-i\theta}q(x,\xi),\quad (x,\xi)\in \real^{2n},
$$
where $\theta>0$ is sufficiently small but fixed.

\section{Global Grushin problem}
\setcounter{equation}{0}
Throughout this section, we shall make the simplifying assumption that the integer $N$ introduced in (\ref{eq1.6}) satisfies $N=1$, and that the
corresponding doubly characteristic point is $X_1=(0,0)\in \real^{2n}$. This assumption serves merely to simplify the notation in the proofs and does
not cause any loss of generality. In particular, we write
$$
p_0(X) = q(X) + {\cal O}(X^3),
$$
where $q$ is a quadratic form, to which Proposition \ref{prop2.2} applies.

\medskip
\noindent
When proving Theorem \ref{theo1}, it will be convenient to work with symbols in the class $S(1)$, bounded together with all of their derivatives,
similarly to what was done in~\cite{HiPr2}. Let us begin this section by describing therefore a reduction to the case when $m=1$. When doing so,
we notice that for all $h>0$ sufficiently small, the operator
$$
P+1 : H(m)\rightarrow L^2(\real^n)
$$
is bijective, and by an application of Beals's lemma, we know that $(P+1)^{-1} \in {\rm Op}_h(S(\left(\frac{1}{m}\right)))$, see~\cite{DiSj}, p.99-100.
Let
$$
\widetilde{P} = (P+1)^{-1} P \in {\rm Op}_h\left(S(1)\right),
$$
with the leading symbol given by
\begeq
\label{eq1.10}
\widetilde{p}_0 = \frac{p_0}{p_0+1}.
\endeq
Furthermore, by holomorphic functional calculus~\cite{HeSjIII}, or by an explicit calculation using the Weyl calculus~\cite{DiSj}
(use formula (8.11) p.100), we see that the subprincipal symbol of $\widetilde{P}$ is given by
\begeq
\label{eq1.11}
\widetilde{p}_1  = \frac{p_1}{(p_0+1)^2}.
\endeq
It follows from (\ref{eq1.10}) that the leading symbol $\widetilde{p}_0$ of the bounded $h$-pseudodifferential operator $\widetilde{P}$
satisfies $\Re \widetilde{p}_0\geq 0$, and that $\Re \widetilde{p}_0$ is elliptic near infinity in the class $S(1)$. Furthermore, $\widetilde{p}_0$
vanishes precisely at the origin, with
$$
\widetilde{p}_0(X) = q(X) + {\cal O}(X^3),\quad \widetilde{p}_1(0) = p_1(0).
$$
In order to deduce the asymptotic description of the eigenvalues for the operator $P$ from the corresponding description for the operator $\widetilde{P}$, we notice that the resolvents of $P$ and $\widetilde{P}$ are related as follows, for $z\in {\rm neigh}(0,\comp)$,
$$
\left(\widetilde{P}-z\right)^{-1} = (1-z)^{-1} \left(P-\frac{z}{1-z}\right)^{-1}(P+1).
$$
Hence, $z\in {\rm neigh}(0,\comp)$ is an eigenvalue of $\widetilde{P}$ precisely when $z/(1-z)$ is an eigenvalue of $P$, and the multiplicities agree.
In what follows, we shall therefore be concerned exclusively with the case when $m=1$.

\subsection{Grushin problem in the quadratic case}

In this subsection, we shall describe a well-posed Gru\-shin problem for the ellip\-tic quad\-ra\-tic ope\-ra\-tor $\widetilde{Q}(x,D_x)$ de\-fined in
(\ref{eq2.19}), acting on the weigh\-ted space $H_{\Phi_\delta}(\comp^n)$, for $\delta>0$ small enough but fixed. Let $\lambda_0\in \comp$ be an
eigenvalue of $\widetilde{Q}(x,D_x)$, and let $E_{\lambda_0}\subset H_{\Phi_{\delta}}(\comp^n)$ be the corresponding finite-dimensional generalized
eigenspace. According to Proposition \ref{prop2.2}, we have
$$
E_{\lambda_0} \subset H_{\Phi_0-\eta \abs{x}^2}(\comp^n),\quad \eta>0.
$$
Let $e_1,\ldots,\, e_{N_0}$ be a basis for $E_{\lambda_0}$. We shall now introduce a suitable dual basis. When doing so, let
$\widetilde{Q}^* = \widetilde{Q}^*(x,D_x)$ be the adjoint of the operator $\widetilde{Q} = \widetilde{Q}(x,D_x)$ acting on
the space $H_{\Phi_0}(\comp^n)$. Here the closed densely defined quadratic operator $\widetilde{Q}$ is equipped with the domain
$\{u\in H_{\Phi_0}(\comp^n);\, \widetilde{Q}u \in H_{\Phi_0}(\comp^n)\}$.
According to the discussion in~\cite{Mehler}, p.426, we have $\widetilde{Q}^*  = T \overline{q}^w T^{-1}$. Here the Weyl symbol of $\overline{q}^w$
is the quadratic form $X\mapsto \overline{q(X)}$, which has a non-negative real part, and whose restriction to the corresponding singular space, which
is equal to $S$, is elliptic. Let $f_1,\ldots,\, f_{N_0}$, $f_j \in H_{\Phi_0}(\comp^n)$, be the basis for the generalized
eigenspace of the adjoint operator $\widetilde{Q}^*: H_{\Phi_0}(\comp^n) \rightarrow H_{\Phi_0}(\comp^n)$, associated to the eigenvalue
$\overline{\lambda_0}$, which is dual to $e_1,\ldots\,\, e_{N_0}$. An application of Proposition 2.2 shows that the functions $f_j$, $1\leq j \leq N_0$,
satisfy
\begeq
\label{eq3.0}
f_j \in H_{\Phi_0-\eta \abs{x}^2}(\comp^n),\quad \eta>0.
\endeq
In particular, $f_j \in H_{\Phi_{\delta}}(\comp^n)$, for $\delta>0$ small enough, and we have
\begeq
\label{eq3.0.1}
{\rm det}\, ((e_j,f_k)) \neq 0,\quad 0\leq \delta \leq \delta_0,
\endeq
for some $\delta_0>0$ sufficiently small. Here the scalar product in (\ref{eq3.0.1}) is taken in the space $H_{\Phi_{\delta}}(\comp^n)$.

\medskip
\noindent
Let us introduce the operators
$$
R_-: \comp^{N_0}\rightarrow H_{\Phi_{\delta}}(\comp^n)
$$
and
$$
R_+: H_{\Phi_{\delta}}(\comp^n)\rightarrow \comp^{N_0},
$$
given by $R_- u_- = \sum_{j=1}^{N_0} u_-(j)e_j$ and $(R_+u)(j) = (u,f_j)$, with the scalar product taken in the space $H_{\Phi_{\delta}}(\comp^n)$.
Arguing as in Section 11 of~\cite{HeSjSt}, we obtain that for $z \in {\rm neigh}(\lambda_0,\comp)$, the Grushin operator
\begeq
\label{eq3.1}
\left( \begin{array}{ccc}
\widetilde{Q}-z & R_- \\\
R_+ & 0
\end{array} \right): {\cal D}(\widetilde{Q}) \times \comp^{N_0} \rightarrow
H_{\Phi_{\delta}}(\comp^n) \times \comp^{N_0}
\endeq
is bijective. Here ${\cal D}(\widetilde{Q}) = \{u\in H_{\Phi_{\delta}}(\comp^n); \, (1+\abs{x}^2) u \in L^2_{\Phi_{\delta}}(\comp^n)\}$.

\medskip
\noindent
Continuing to follow~\cite{HeSjSt}, we shall now restore the semiclassical parameter $h>0$ and consider the operators
\begeq
\label{eq3.2}
R_{-,h} = {\cal O}(1): \comp^{N_0} \rightarrow H_{\Phi_{\delta},h}(\comp^n),\quad R_{+,h} = {\cal O}(1):  H_{\Phi_{\delta},h} \rightarrow \comp^{N_0},
\endeq
given by
\begeq
\label{eq3.2.1}
R_{-,h} u_- = \sum_{j=1}^{N_0} u_-(j) e_{j,h},\quad \left(R_{+,h}u\right)(j) = (u,f_{j,h}).
\endeq
Here the scalar product in the definition of $R_{+,h}$ is taken in the space $H_{\Phi_{\delta},h}(\comp^n)$, and
$$
e_{j,h}(x) = h^{-n/2} e_j\left(\frac{x}{\sqrt{h}}\right), \quad f_{j,h}(x) = h^{-n/2} f_j\left(\frac{x}{\sqrt{h}}\right).
$$

\medskip
\noindent
With $\widetilde{Q} = \widetilde{Q}(x,hD_x)$, we shall now consider the semiclassical Grushin problem, given by
\begeq
\label{eq3.3}
\left(\widetilde{Q}-hz\right)u + R_{-,h}u_- = v,\quad R_{+,h}v = v_+.
\endeq
Here $z$ varies in a sufficiently small but fixed \neigh{} of the eigenvalue $\lambda_0$.
At this point, we are exactly in the same situation as described in Section 11 of~\cite{HeSjSt} (Proposition~11.5), and arguing exactly as in that paper,
we see that for each $(v,v_+)\in H_{\Phi_{\delta},h}(\comp^n)\times \comp^{N_0}$, the problem
(\ref{eq3.3}) has a unique solution $(u,u_-)\in H_{\Phi_{\delta},h}(\comp^n)\times \comp^{N_0}$ such that
$(1+\abs{x}^2)u \in L^2_{\Phi_{\delta},h}(\comp^n)$. Furthermore, for every $k\in \real$ fixed, the following a priori estimate holds,
\begeq
\label{eq3.4}
\norm{(h+\abs{x}^2)^{1-k}u} + h^{-k} \abs{u_-} \\
\leq {\cal O}(1) \left(\norm{(h+\abs{x}^2)^{-k}v} + h^{1-k}\abs{v_+}\right).
\endeq
Here the norms are taken in the space $L^2_{\Phi_{\delta},h}(\comp^n)$.

\medskip
\noindent
The estimate (\ref{eq3.4}) can subsequently be localized, and we see that the result of Proposition 11.6 of~\cite{HeSjSt} can be applied to our
situation as it stands, since the proof of Proposition 11.6 in~\cite{HeSjSt} only relies on the ellipticity of the quadratic operator $\widetilde{Q}$
acting on $H_{\Phi_{\delta},h}(\comp^n)$, for $\delta>0$ small enough but fixed, together with the decay estimates given in Proposition \ref{prop2.2} and
in (\ref{eq3.0}). We therefore obtain the following result, which summarizes the discussion in this section.

\begin{prop}
\label{prop3.1}
Let $\chi_0\in C^{\infty}_0(\comp^n)$ be fixed, such that $\chi_0=1$ near $x=0$, and let $k\in \real$ be fixed. Then for
$z\in {\rm neigh}(\lambda_0, \comp)$, we have the following estimate for the problem {\rm (\ref{eq3.3})}, valid for all $h>0$ sufficiently small,
\begin{multline}
\label{eq3.5}
\norm{(h+\abs{x}^2)^{1-k}\chi_0 u} + h^{-k}\abs{u_-} \\
\leq {\cal O}(1)\left(\norm{(h+\abs{x}^2)^{-k}\chi_0 v} + h^{1-k}\abs{v_+} +
h^{1/2} \norm{1_K u}\right).
\end{multline}
Here $K$ is a fixed neighborhood of ${\rm supp}(\nabla \chi_0)$ and $1_K$ stands for the characteristic function of this set. The norms in the estimate
{\rm (\ref{eq3.5})} are taken in the space $L^2_{\Phi_{\delta},h}(\comp^n)$.
\end{prop}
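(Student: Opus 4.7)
The plan is to deduce the localized estimate \eqref{eq3.5} from the global estimate \eqref{eq3.4} by applying the latter to $\chi_0 u$ in place of $u$, and then carefully accounting for the commutator $[\widetilde{Q},\chi_0]$ and the tail of $R_{-,h}u_-$ outside the support of $\chi_0$. Concretely, given a solution $(u,u_-)$ of \eqref{eq3.3}, set $\tilde{u}=\chi_0 u$ and compute
\[
\left(\widetilde{Q}-hz\right)\tilde{u}+R_{-,h}u_- = \chi_0 v +[\widetilde{Q},\chi_0]u-(1-\chi_0)R_{-,h}u_-.
\]
The pair $(\tilde{u},u_-)$ then solves the global Grushin problem with right-hand side $(\tilde{v},\tilde{v}_+)$ consisting of the right-hand side above together with $R_{+,h}\tilde{u}$, and I would apply \eqref{eq3.4} to this new problem.

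Next, I would bound each of the perturbing terms in the norm $\|(h+|x|^2)^{-k}\cdot\|_{L^2_{\Phi_\delta,h}}$. Since $\widetilde{Q}=\widetilde{Q}(x,hD_x)$ is a quadratic differential operator, the commutator $[\widetilde{Q},\chi_0]$ is a first-order semiclassical operator with coefficients supported in $K\supset {\rm supp}(\nabla\chi_0)$, carrying an explicit factor of $h$ and derivatives in $hD_x$; combined with ellipticity of $\widetilde{Q}$ on $H_{\Phi_\delta,h}(\comp^n)$ (which lets one absorb one derivative at a cost of $h^{-1/2}$, using the Gaussian scale $\sqrt{h}$ on which the problem naturally lives), this contribution is controlled by $\mathcal{O}(h^{1/2})\|1_K u\|$, yielding exactly the last term in \eqref{eq3.5}. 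For the tail term $(1-\chi_0)R_{-,h}u_-=\sum u_-(j)(1-\chi_0)e_{j,h}$, I would use that by Proposition~\ref{prop2.2} the $e_j$ lie in $H_{\Phi_0-\eta|x|^2}(\comp^n)$, so that after rescaling $x\mapsto x/\sqrt{h}$ the functions $(1-\chi_0)e_{j,h}$ are exponentially small in $1/h$, and can be absorbed into the main term on the left-hand side. The modification of $R_{+,h}\tilde{u}$ relative to $v_+=R_{+,h}u$ is handled in the same way, using the Gaussian decay \eqref{eq3.0} of the $f_j$.

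Combining these observations with \eqref{eq3.4}, the left-hand side $\|(h+|x|^2)^{1-k}\chi_0 u\|+h^{-k}|u_-|$ is bounded by
\[
\mathcal{O}(1)\bigl(\|(h+|x|^2)^{-k}\chi_0 v\|+h^{1-k}|v_+|\bigr)+\mathcal{O}(h^{1/2})\|1_K u\|+\mathcal{O}(h^\infty)(\|v\|+|v_+|),
\]
after absorbing exponentially small tails. Since the $\mathcal{O}(h^\infty)$ terms are dominated by the leading ones, this gives \eqref{eq3.5}.

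The main obstacle I expect is keeping track of the correct power of $h$ in the commutator contribution: one needs to use that on ${\rm supp}(\nabla\chi_0)$, which is bounded away from the origin, the weight $(h+|x|^2)^{-k}$ is of order $1$, and that the semiclassical ellipticity of $\widetilde{Q}$ on $H_{\Phi_\delta,h}$ gains one full derivative at the cost of $h$, so that $\|(h+|x|^2)^{-k}[\widetilde{Q},\chi_0]u\|\lesssim h^{1/2}\|1_K u\|$ and not merely $\mathcal{O}(1)\|1_K u\|$. This is the same bookkeeping as in Proposition~11.6 of~\cite{HeSjSt}, whose argument transfers verbatim once Proposition~\ref{prop2.2} and \eqref{eq3.0} are in place.
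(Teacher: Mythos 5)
Your plan --- apply the global estimate \eqref{eq3.4} to $\tilde u=\chi_0u$ and absorb the commutator and tail terms --- runs into a genuine obstruction at the very first step. The estimate \eqref{eq3.4} is an a priori estimate for the Grushin problem \eqref{eq3.3} posed on the weighted space of \emph{holomorphic} functions $H_{\Phi_\delta,h}(\comp^n)\times\comp^{N_0}$; the discussion that produces \eqref{eq3.4} (unitary equivalence of $\widetilde Q$ with a quadratic operator on $L^2(\real^{n'})$ having positive definite real part, and unique solvability in $H_{\Phi_\delta,h}$) is a statement about the action of $\widetilde Q$ on the closed subspace of entire functions. Because $\chi_0\in C^{\infty}_0(\comp^n)$ is not holomorphic, neither $\tilde u=\chi_0u$ nor the modified right-hand side $\tilde v = \chi_0v + [\widetilde Q,\chi_0]u - (1-\chi_0)R_{-,h}u_-$ belongs to $H_{\Phi_\delta,h}(\comp^n)$, so $(\tilde u,u_-)$ is not a solution of the Grushin problem to which \eqref{eq3.4} applies, and \eqref{eq3.4} cannot be invoked as a black box. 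Indeed, viewed as a differential operator on all of $L^2_{\Phi_\delta,h}(\comp^n)$, $\widetilde Q$ has a large kernel of non-holomorphic elements, so no injectivity estimate of the form \eqref{eq3.4} can hold off the holomorphic subspace; this is a gap in the mechanism, not merely a bookkeeping issue.

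The paper instead invokes Proposition~11.6 of~\cite{HeSjSt}, whose proof re-derives the weighted estimate \emph{directly with the cut-off inserted}, working with the contour-integral realization of $\widetilde Q$ and the scalar products in $L^2_{\Phi_\delta,h}(\comp^n)$, rather than multiplying the holomorphic unknown by $\chi_0$ and treating the result as a new solution of \eqref{eq3.3}. Your identification of the key error terms --- the commutator producing the $\mathcal{O}(h^{1/2})\|1_Ku\|$ contribution, and the tails of $R_{-,h}u_-$ absorbed via the Gaussian decay in Proposition~\ref{prop2.2} and \eqref{eq3.0} --- is an accurate description of what ultimately appears; what is missing is a valid argument that produces them. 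One further point your handling of the $R_{+,h}$ contribution glosses over, flagged explicitly in the Remark after Proposition~\ref{prop3.1}, is that one replaces $f_{j,h}$ by the truncated functions $\chi(x/R\sqrt{h})f_{j,h}(x)$ with $R\gg 1$ fixed; without this replacement $R_{+,h}$ is not genuinely localized, and the comparison between $R_{+,h}(\chi_0u)$ and $v_+=R_{+,h}u$ is more delicate than a simple $\mathcal{O}(h^{\infty})$ perturbation.
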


\medskip
\noindent
{\it Remark}. When deriving the estimate (\ref{eq3.5}), following~\cite{HeSjSt}, we replace the functions $f_{j,h}$ in the definition of $R_{+,h}$ by
$\chi(x/R\sqrt{h}) f_{j,h}(x)$, where $\chi\in C^{\infty}_0(\comp^n)$, and $R>0$ is sufficiently large fixed.

\subsection{Localization and exterior estimates}
The purpose of this subsection is to study a globally well-posed Grushin problem for the operator $P$ introduced in (\ref{eq1.3}). When doing so, we
shall be concerned with the action of $P$, after an FBI-Bargmann transformation, on a suitable weighted space of holomorphic functions on $\comp^n$.
We shall therefore first proceed to recall the definition and properties of this space, constructed and introduced in~\cite{HiPr2}.

\medskip
\noindent
In Proposition~2 of~\cite{HiPr2}, it was shown that for all $0<\eps \leq \eps_0$, $0<\delta \leq \delta_0$, with $\eps_0>0$, $\delta_0>0$
sufficiently small, there exists a function $G_{\eps}\in C^{\infty}_0(\real^{2n},\real)$, supported in a sufficiently small but fixed \neigh{} of
the origin, such that $G_{\eps} = {\cal O}(\eps)$, $\nabla^2 G_{\eps} = {\cal O}(1)$, and such that for some $C>1$, $\widetilde{C}>1$, we have
$$
\abs{p_0\left(X+i\delta H_{G_{\eps}}(X)\right)}\geq \frac{\delta}{\widetilde{C}}{\rm min}\left(\abs{X}^2,\eps\right),
$$
in the region where $\abs{X}\leq 1/C$. Furthermore, in the region where $\abs{X}\geq \eps^{1/2}$, we have
\begeq
\label{eq4.0}
\Re \left(\left(1-\frac{i c \delta \eps}{\abs{X}^2}\right)p_0 \left(X+i\delta H_{G_{\eps}}(X)\right)\right) \geq
\frac{\delta \eps}{\widetilde{C}},\quad c>0.
\endeq
Here we have also written $p_0$ for an almost analytic extension of the leading symbol $p_0$ of $P$ to a tubular \neigh{} of $\real^{2n}$,
bounded together with all of its derivatives.

\medskip
\noindent
{\it Remark}. For future reference, we may remark that it follows from the construction of the weight function $G_{\eps}$ in \cite{HiPr2}, that in
the region where $\abs{X}^2 \leq \eps/2$,
we have
\begeq
\label{eq4.0.1}
G_{\eps}(X) = G_0(X') +{\cal O}(X^3),
\endeq
where the quadratic form $G_0$ is defined in (\ref{eq2.4}), see remark p.1002 in \cite{HiPr2}.

\medskip
\noindent
Associated with the weight function $G_{\eps}$ there is an IR-manifold
\begeq
\label{eq4.1}
\Lambda_{\delta,\eps} = \left\{X+i\delta H_{G_{\eps}}(X); X\in \real^{2n} \right \},
\endeq
and arguing as in~\cite{HiPr2} (Section 3), we obtain that
\begeq
\label{eq4.2}
\kappa_T\left(\Lambda_{\delta,\eps}\right)=\Lambda_{\Phi_{\delta,\eps}}:=\left \{(x,\xi)\in \comp^{2n};
\xi = \frac{2}{i} \frac{\partial \Phi_{\delta,\eps}}{\partial x}(x)\right \}.
\endeq
Here $\Phi_{\delta,\eps}\in C^{\infty}(\comp^n)$ is a strictly plurisubharmonic function given by
\begeq
\label{eq4.2.1}
\Phi_{\delta,\eps}(x) = {\rm v.c.}_{(y,\eta)\in {\bf C}^{n}\times {\bf R}^{n}} \left(-\Im \varphi(x,y) - (\Im y)\cdot \eta + \delta
G_{\eps}(\Re y, \eta)\right).
\endeq
Uniformly on $\comp^n$, we have
\begeq
\label{eq4.3}
\Phi_{\delta,\eps}(x)=\Phi_0(x)+\delta G_{\eps}(\Re x, -\Im x)+{\cal O}(\delta^2 \eps),
\endeq
and in particular,
\begeq
\label{eq4.3.1}
\Phi_{\delta,\eps} - \Phi_0 = {\cal O}(\delta \eps).
\endeq
We furthermore know that $\Phi_{\delta,\eps}$ agrees with $\Phi_0$ outside a bounded set and that
\begeq
\label{eq4.4}
\nabla\left(\Phi_{\delta,\eps}-\Phi_0\right)={\cal O}(\delta \eps^{1/2}),
\endeq
with $\nabla^2 \Phi_{\delta,\eps}\in L^{\infty}(\comp^n)$, uniformly in $\delta$ and $\eps$.

\medskip
\noindent
In what follows, similarly to~\cite{HiPr2} (Section 3), we shall be concerned with the case when
\begeq
\label{eq4.4.1}
\eps=Ah,
\endeq
when $A\geq 1$ is sufficiently large but fixed, to be
chosen in what follows. As explained in~\cite{HiPr2} (Section 3), following~\cite{HeSjSt}, the $h$--pseudodifferential operator on the FBI--Bargmann transform
side, $\widetilde{P}:= TPT^{-1}$, can therefore be defined as a uniformly bounded operator
$$
\widetilde{P} ={\cal O}(1): H_{\Phi_{\delta,\eps},h}(\comp^n) \rightarrow H_{\Phi_{\delta,\eps},h}(\comp^n),
$$
given, when $u\in H_{\Phi_{\delta,\eps},h}(\comp^n)$, by
\begeq
\label{eq4.5}
\widetilde{P} u(x) = \frac{1}{(2\pi h)^n} \int\!\!\!\int_{\Gamma_{\delta,\eps}(x)} e^{\frac{i}{h}(x-y)\cdot \theta}
\psi(x-y) \widetilde{P}\left(\frac{x+y}{2},\theta\right) u(y)\,dy\,d\theta + Ru.
\endeq
Here $\psi\in C^{\infty}_0(\comp^n)$ is such that $\psi=1$ near $0$ and $\Gamma_{\delta,\eps}(x)$ is the contour given by
$$
\theta = \frac{2}{i} \frac{\partial \Phi_{\delta,\eps}}{\partial x}\left(\frac{x+y}{2}\right)+it_0\overline{(x-y)},\quad t_0>0.
$$
The remainder $R$ in (\ref{eq4.5}) satisfies
$$
R={\cal O}_A(h^{\infty}): L^2(\comp^n; e^{-\frac{2\Phi_{\delta,\eps}}{h}}L(dx))\rightarrow
L^2(\comp^n; e^{-\frac{2\Phi_{\delta,\eps}}{h}}L(dx)).
$$
Also, in (\ref{eq4.5}) we continue to write $\widetilde{P}$ for an almost holomorphic extension of the full symbol
$\widetilde{P}\in S(\Lambda_{\Phi_0},1)$ of $\widetilde{P}$, $\widetilde{P} = P\circ \kappa_T^{-1}$, to a tubular \neigh{} of
$\Lambda_{\Phi_0}$, bounded together with all of its derivatives.

\medskip
\noindent
We shall be concerned with a global Grushin problem for the operator $\widetilde{P}$ in the weighted space $H_{\Phi_{\delta,\eps},h}(\comp^n)$.
In order to exploit the quadratic Grushin problem for $\widetilde{Q}$, described in subsection 3.1, we shall make use of the observation that there exists
a constant $C>0$ such that in the region of $\comp^n$, where
\begeq
\label{eq4.5.0}
\abs{x} \leq \frac{\sqrt{\eps}}{C},
\endeq
the weight function $\Phi_{\delta,\eps}$ is independent of $\eps$, and furthermore, in this region, we have
\begeq
\label{eq4.5.1}
\Phi_{\delta,\eps} = \Phi_{\delta}(x) + {\cal O}(\delta \abs{x}^3).
\endeq
The equality (\ref{eq4.5.1}) is obtained by a straightforward computation, using (\ref{eq2.12.1}), its analogue for the weight
$\Phi_{\delta,\eps}$, given by (\ref{eq4.2.1}), as well as (\ref{eq4.0.1}).

\medskip
\noindent
By making a rescaling in $\eps$, we may and will assume in the following that we have $C=1$ in
(\ref{eq4.5.0}). It follows that in
the region where $\abs{x}\leq \sqrt{\eps}$, the $L^2$--norm associated to the quadratic weight function $\Phi_{\delta}$ can be replaced by the
$L^2$--norm associated to the full weight $\Phi_{\delta,\eps}$, at the expense of a loss which is
$$
\exp({\cal O}(1) A^{3/2}h^{1/2}) = {\cal O}(1),
$$
provided that $A\geq 1$ is taken large but fixed, and $h\in (0,h_0]$, with $h_0>0$ small enough depending on $A$.
We shall therefore replace the fixed cut-off function $\chi_0$ in Proposition \ref{prop3.1} by $\chi_0(x/\sqrt{\eps})$, and following Section 11
of~\cite{HeSjSt}, this can be achieved by a rescaling argument using the change of variables $x=\sqrt{\eps}\widetilde{x}$.
This argument is carried out in detail, see (11.33), in Section 11.3 of~\cite{HeSjSt}, and for future reference, we shall record it here.

\begin{lemma}
\label{lemma4.1}
Let $\chi_0\in C^{\infty}_0(\comp^n)$ be fixed, such that $\chi_0=1$ near $x=0$, and let $k\in \real$ be fixed. Then for
$z\in {\rm neigh}(\lambda_0, \comp)$, we have the following estimate for the Grushin problem {\rm (\ref{eq3.3})}, valid for $h>0$ sufficiently small,
with $\eps = Ah$,
\begin{multline}
\label{eq4.6}
\norm{(h+\abs{x}^2)^{1-k}\chi_0\left(\frac{x}{\sqrt{\eps}}\right) u} + h^{-k}\abs{u_-} \leq
{\cal O}(1) \norm{(h+\abs{x}^2)^{-k}\chi_0\left(\frac{x}{\sqrt{\eps}}\right) v} \\
+ {\cal O}(1)\left(h^{1-k}\abs{v_+} +
\sqrt{\frac{h}{\eps}} \norm{(h+\abs{x}^2)^{1-k}1_K\left(\frac{x}{\sqrt{\eps}}\right) u}\right).
\end{multline}
Here $K$ is a fixed neighborhood of ${\rm supp}(\nabla \chi_0)$ and $1_K$ stands for the characteristic function of this set. The norms in the estimate {\rm (\ref{eq4.6})} are taken in the space $L^2_{\Phi_{\delta},h}(\comp^n)$. According to {\rm (\ref{eq4.5.1})}, all the norms in the estimate
{\rm (\ref{eq4.6})} can be replaced by the norms in the space $L^2_{\Phi_{\delta,\eps},h}(\comp^n)$, for each fixed $A\gg 1$, provided that $h\in (0,h_0]$, with $h_0>0$ small enough, depending on $A$.
\end{lemma}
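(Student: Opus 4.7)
The plan is to derive (\ref{eq4.6}) from Proposition \ref{prop3.1} by the parabolic dilation $x = \sqrt{\eps}\,\tilde x$, exploiting the quadratic homogeneity of both $\widetilde{Q}(x,D_x)$ and of the weight $\Phi_\delta$. Since $\tilde q$ is a quadratic form, the $h$-Weyl quantization transforms under the unitary dilation $U_{\sqrt{\eps}}u(\tilde x) = \eps^{n/4} u(\sqrt{\eps}\,\tilde x)$ according to $U_{\sqrt{\eps}}\,\widetilde{Q}(x,hD_x)\,U_{\sqrt{\eps}}^{-1} = \eps\, \widetilde{Q}(\tilde x, \tilde h D_{\tilde x})$ with new semiclassical parameter $\tilde h = h/\eps = 1/A$, whereas the quadratic weight satisfies $\Phi_\delta(x)/h = \Phi_\delta(\tilde x)/\tilde h$. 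Hence $L^2_{\Phi_\delta,h}(\comp^n, dx)$ is mapped isometrically (up to a Jacobian factor $\eps^{n/2}$) onto $L^2_{\Phi_\delta, \tilde h}(\comp^n, d\tilde x)$, and using $e_{j,h}(\sqrt{\eps}\,\tilde x) = \eps^{-n/2} e_{j,\tilde h}(\tilde x)$ together with the analogous identity for $f_{j,h}$, one checks that after dividing by $\eps$ the Grushin problem (\ref{eq3.3}) is transformed into the same problem at the fixed small semiclassical scale $\tilde h = 1/A$, for rescaled data $\tilde u, \tilde u_-, \tilde v, \tilde v_+$ that differ from the originals by explicit powers of $\eps$.

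Choosing $A \gg 1$ so that $\tilde h = 1/A$ falls within the range of validity of Proposition \ref{prop3.1}, I then apply (\ref{eq3.5}) in the $\tilde x$-variable with cut-off $\chi_0(\tilde x)$ and parameter $\tilde h$. Translating back via $\tilde x = x/\sqrt{\eps}$, one has $\chi_0(\tilde x) = \chi_0(x/\sqrt{\eps})$ and $\tilde h + |\tilde x|^2 = (h + |x|^2)/\eps$, and the various powers of $\eps$ arising from the LHS, RHS and Jacobian match by scale invariance, producing (\ref{eq4.6}) with error prefactor $\tilde h^{1/2} = \sqrt{h/\eps}$. The additional weight $(h+|x|^2)^{1-k}$ that accompanies the error term in (\ref{eq4.6}), which is absent from (\ref{eq3.5}), is natural because $1_K(x/\sqrt{\eps})$ is supported where $|x|$ is of order $\sqrt{\eps}$, so that $h + |x|^2$ is of order $\eps$ on this support, and this prefactor is merely bounded on the relevant region, absorbing one of the $\eps$-powers involved in the balance.

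To convert $L^2_{\Phi_\delta, h}$ norms into $L^2_{\Phi_{\delta, \eps}, h}$ norms, I invoke (\ref{eq4.5.1}): the relevant supports are contained in $\{|x| \leq C\sqrt{\eps}\}$, on which $\Phi_{\delta, \eps} - \Phi_\delta = {\cal O}(\delta|x|^3) = {\cal O}(\delta A^{3/2} h^{3/2})$, hence $2(\Phi_{\delta,\eps} - \Phi_\delta)/h = {\cal O}(A^{3/2} h^{1/2}) = {\cal O}(1)$, provided $h \in (0, h_0]$ with $h_0$ small depending on $A$. This yields the announced equivalence of the two weighted $L^2$-norms on the relevant set, completing the proof.

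The main technical obstacle is the careful bookkeeping of powers of $\eps$ and Jacobian factors under the dilation, which requires verifying that the conjugation formula for quadratic $h$-Weyl quantizations and the rescalings of the Grushin data combine so that (\ref{eq3.5}) translates cleanly into (\ref{eq4.6}). Once this is accomplished, the reduction to Proposition \ref{prop3.1} is immediate thanks to the quadratic homogeneity of $q$ and $\Phi_\delta$, together with the local comparison (\ref{eq4.5.1}) between $\Phi_\delta$ and $\Phi_{\delta,\eps}$ near the doubly characteristic point.
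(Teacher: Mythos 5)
Your proposal is correct and follows essentially the same route as the paper, which likewise derives (\ref{eq4.6}) from Proposition \ref{prop3.1} by the parabolic rescaling $x=\sqrt{\eps}\,\widetilde{x}$ (the paper defers the detailed bookkeeping to Section~11.3 of H\'erau--Sj\"ostrand--Stolk rather than carrying it out), and then converts $L^2_{\Phi_{\delta},h}$ norms to $L^2_{\Phi_{\delta,\eps},h}$ norms using (\ref{eq4.5.1}) at an ${\cal O}(1)$ cost for $A$ fixed and $h$ small. One cosmetic slip: the unitary dilation on $H_{\Phi_\delta,h}(\comp^n)$ carries the factor $\eps^{n/2}$ rather than $\eps^{n/4}$ (the phase space is $\comp^n\cong\real^{2n}$), but since this $\eps$-independent normalization cancels on both sides of the estimate it does not affect the argument.
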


\bigskip
\noindent
We now come to study the global Grushin problem for the operator $\widetilde{P}-hz$, for $z\in {\rm neigh}(\lambda_0+p_1(0),\comp)$, in the weighted space
$H_{\Phi_{\delta,\eps},h}(\comp^n)$. Here $p_1(0)$ is the value of the subprincipal symbol $p_1(x,\xi)$ of $P$ at the unique doubly characteristic point, $(0,0)\in \real^{2n}$. With the operators $R_{-,h}$ and $R_{+,h}$ introduced in (\ref{eq3.2.1}), let us consider
\begeq
\label{eq4.7}
(\widetilde{P}-hz)u + R_{-,h}u_-=v,\quad R_{+,h}u = v_+,
\endeq
when $(v,v_+)\in H_{\Phi_{\delta,\eps},h}(\comp^n)\times \comp^{N_0}$. Writing the first equation in (\ref{eq4.7}) in the form
$$
(\widetilde{Q}-h(z-p_1(0)))u + R_{-,h}u_- = v + (\widetilde{Q}+h p_1(0)-\widetilde{P})u,
$$
and applying Lemma \ref{lemma4.1} with $k=1/2$, we get, with some constant $C>0$,
\begin{multline}
\label{eq4.8}
\norm{(h+\abs{x}^2)^{1/2}\chi_0\left(\frac{x}{\sqrt{\eps}}\right) u} + h^{-1/2}\abs{u_-} \\
\leq C\norm{(h+\abs{x}^2)^{-1/2}\chi_0\left(\frac{x}{\sqrt{\eps}}\right) v} +
C\norm{(h+\abs{x}^2)^{-1/2}\chi_0\left(\frac{x}{\sqrt{\eps}}\right) (\widetilde{P}-\widetilde{Q}-hp_1(0))u} \\
+ {\cal O}(h^{1/2})\abs{v_+} + C\sqrt{\frac{h}{\eps}} \norm{(h+\abs{x}^2)^{1/2}1_K\left(\frac{x}{\sqrt{\eps}}\right)u}.
\end{multline}
Here the norms are taken in the space $L^2_{\Phi_{\delta,\eps},h}(\comp^n)$, as explained in Lemma \ref{lemma4.1}.
Now, as was already observed and exploited in~\cite{HiPr2}, see (5.7) in Section 5, we have
$$
\norm{(h+\abs{x}^2)^{-1/2}\chi_0\left(\frac{x}{\sqrt{\eps}}\right) (\widetilde{P}-\widetilde{Q}-hp_1(0))u} = {\cal O}_A(h)\norm{u},
$$
and therefore, using also that $h+\abs{x}^2\leq {\cal O}(\eps)$ in the support of the function
$$x\mapsto 1_K(x/\sqrt{\eps}),$$
we get
\begin{multline}
\label{eq4.9}
h^{1/2} \norm{\chi_0\left(\frac{x}{\sqrt{\eps}}\right)u} + h^{-1/2}\abs{u_-} \\
\leq {\cal O}(h^{-1/2})\norm{v} + {\cal O}_A(h)\norm{u} + {\cal O}(h^{1/2})\abs{v_+} +{\cal O}(h^{1/2})\norm{1_K\left(\frac{x}{\sqrt{\eps}}\right)u}.
\end{multline}
It follows from (\ref{eq4.9}) upon squaring that
\begin{multline}
\label{eq4.10}
h \norm{\chi_0\left(\frac{x}{\sqrt{\eps}}\right)u}^2 + h^{-1}\abs{u_-}^2 \\
\leq \frac{{\cal O}(1)}{h}\norm{v}^2 + {\cal O}_A(h^2)\norm{u}^2 + {\cal O}(h)\abs{v_+}^2 +{\cal O}(h)\norm{1_K\left(\frac{x}{\sqrt{\eps}}\right)u}^2.
\end{multline}
The estimate (\ref{eq4.10}) will be instrumental in obtaining the global well-posedness of the Grushin problem (\ref{eq4.7}).

\bigskip
\noindent
When deriving an a priori estimate for the problem (\ref{eq4.7}) away from an ${\cal O}(\sqrt{\eps})$--neighborhood of the doubly characteristic point $x=0 \in \comp^n$, we shall
proceed very much in the spirit of Section 6 in~\cite{HiPr2}. Let $\widetilde{p}_0$ be an almost holomorphic continuation of the leading symbol of
$\widetilde{P}$, bounded together with all of its derivatives in a tubular \neigh{} of $\Lambda_{\Phi_0}$. To simplify the notation, we shall write here $p:=\widetilde{p}_0$. According to (\ref{eq4.0}), we know that
\begeq
\label{eq4.10.1}
\Re\left(\left(1-ic \frac{\delta \eps}{\abs{x}^2}\right) {p}\left(x,\frac{2}{i}\frac{\partial \Phi_{\delta,\eps}(x)}{\partial x}\right)\right) \geq \frac{\delta \eps}{\widetilde{C}},\quad \abs{x}\geq \sqrt{\eps}.
\endeq
Following Section 6 of~\cite{HiPr2}, we shall now switch to rescaled variables. Set
\begeq
\label{eq4.11}
x = \sqrt{\eps} \widetilde{x}.
\endeq
In the new coordinates, the IR-manifold $\Lambda_{\Phi_{\delta,\eps}}$ in (\ref{eq4.2}) becomes replaced by the manifold
\begeq
\label{eq4.12}
\Lambda_{\widetilde{\Phi}_{\delta,\eps}}=\Big\{\Big(\tilde{x},\frac{2}{i} \frac{\partial \widetilde{\Phi}_{\delta,\eps}(\widetilde{x})}{\partial
\widetilde{x}}\Big) : \tilde{x} \in \comp^n\Big\},
\endeq
with
$$\widetilde{\Phi}_{\delta,\eps}(\widetilde{x}) = \frac{1}{\eps} \Phi_{\delta,\eps}(\sqrt{\eps}\widetilde{x}).$$
We notice that $\nabla^2 \widetilde{\Phi}_{\delta,\eps}\in L^{\infty}(\comp^n)$ uniformly in $\eps\in (0,\eps_0]$, $\delta\in (0,\delta_0]$, and that
along $\Lambda_{\widetilde{\Phi}_{\delta,\eps}}$, we have
$$
\widetilde{\xi} = -\Im \widetilde{x} +{\cal O}(\delta).
$$

\medskip
\noindent
Let us consider the $\widetilde{h}$--pseudodifferential operator,
\begeq
\label{eq4.13}
P_{\eps}:= \frac{1}{\eps} p^w(x,hD_x) = \frac{1}{\eps}p^w \left(\sqrt{\eps}\left(\widetilde{x},\widetilde{h}D_{\widetilde{x}}\right)\right),
\quad \widetilde{h}=\frac{h}{\eps}=\frac{1}{A},
\endeq
with the Weyl symbol given by
\begeq
\label{eq4.14}
p_{\eps}(\widetilde{x},\widetilde{\xi})=\frac{1}{\eps} p\left(\sqrt{\eps}(\widetilde{x},\widetilde{\xi})\right).
\endeq

\medskip
\noindent
It follows from (\ref{eq4.10.1}) that along the manifold $\Lambda_{\widetilde{\Phi}_{\delta,\eps}}$, the symbol (\ref{eq4.14}) satisfies the
following estimate,
\begeq
\label{eq4.15}
\Re \left(\left(1-ic \frac{\delta}{\abs{\widetilde{x}^2}}\right)
{p}_{\eps}\left(\widetilde{x},\frac{2}{i}\frac{\partial \widetilde{\Phi}_{\delta,\eps}(\widetilde{x})}{\partial \widetilde{x}}\right)\right)
\geq \frac{\delta}{\widetilde{C}},
\endeq
in the region where $\abs{\widetilde{x}}\geq 1$.

\medskip
\noindent
Associated with the IR-manifold $\Lambda_{\widetilde{\Phi}_{\delta,\eps}}$ is the weighted space $H_{\widetilde{\Phi}_{\delta,\eps},
\widetilde{h}}(\comp^n)$, where we notice that
$$
\frac{\widetilde{\Phi}_{\delta,\eps}(\widetilde{x})}{\widetilde{h}} = \frac{\Phi_{\delta,\eps}(x)}{h}.
$$
The map $u(x)\mapsto \widetilde{u}(\widetilde{x})=\eps^{n/2} u(\sqrt{\eps}\widetilde{x})$ then takes the space $H_{\Phi_{\delta,\eps},h}(\comp^n)$
unitarily onto the space $H_{\widetilde{\Phi}_{\delta,\eps},\widetilde{h}}(\comp^n)$.

\bigskip
\noindent
Let now $\chi(\widetilde{x})\in C^{\infty}_b(\comp^n;[0,1])$ be such that $\chi=1$ for large $\abs{\widetilde{x}}$, and with ${\rm supp}\, \chi$ contained in the set where $\abs{\widetilde{x}}\geq 1$. Let us set
$$
m(\widetilde{x})=1-ic\frac{\delta}{\abs{\widetilde{x}}^2}.
$$
Assume also that the spectral parameter $z\in \comp$ satisfies $\abs{z}\leq C$, for some fixed $C>0$. An application of Proposition 3 of~\cite{HiPr2}, as in (6.15) in \cite{HiPr2}, shows that the scalar product
\begeq
\label{eq4.16}
\left(\chi m ({P}_{\eps}-\widetilde{h}z)\widetilde{u},\widetilde{u}\right)_{\widetilde{\Phi}_{\delta,\eps},\widetilde{h}}
\endeq
is equal to
$$
\int \chi(\widetilde{x})m(\widetilde{x}){p}_{\eps}
\left(\widetilde{x},\frac{2}{i}\frac{\partial \widetilde{\Phi}_{\delta,\eps}(\widetilde{x})}{\partial \widetilde{x}}\right)
\abs{\widetilde{u}(\widetilde{x})}^2
e^{-2\widetilde{\Phi}_{\delta,\eps}(\widetilde{x})/\widetilde{h}}\, L(d\widetilde{x}) +
{\cal O}(\widetilde{h})\norm{\widetilde{u}}^2_{\widetilde{\Phi}_{\delta,\eps},\tilde{h}}.
$$
Thus,
\begin{multline*}
\Re \left(\chi m ({P}_{\eps}-\widetilde{h}z)\widetilde{u},\widetilde{u}\right)_{\widetilde{\Phi}_{\delta,\eps},\widetilde{h}} \\
= \int \chi(\widetilde{x}) \Re \left(m(\widetilde{x}){p}_{\eps}
\left(\widetilde{x},\frac{2}{i}\frac{\partial \widetilde{\Phi}_{\delta,\eps}(\widetilde{x})}{\partial \widetilde{x}}\right)\right)
\abs{\widetilde{u}(\widetilde{x})}^2 e^{-2\widetilde{\Phi}_{\delta,\eps}(\widetilde{x})/\widetilde{h}}\, L(d\widetilde{x}) \\ +
{\cal O}(\widetilde{h})\norm{\widetilde{u}}^2_{\widetilde{\Phi}_{\delta,\eps},\widetilde{h}},
\end{multline*}
and using that (\ref{eq4.15}) holds near the support of $\chi$, we get, by an application of the Cauchy-Schwarz inequality,
\begin{multline*}
\int \chi(\widetilde{x}) \abs{\widetilde{u}(\widetilde{x})}^2
e^{-2\widetilde{\Phi}_{\delta,\eps}(\widetilde{x})/\widetilde{h}}\, L(d\widetilde{x}) \\
\leq {\cal O}(1)
\norm{\chi ({P}_{\eps}-\widetilde{h}z)\widetilde{u}}_{\widetilde{\Phi}_{\delta,\eps},\widetilde{h}}\,
\norm{\widetilde{u}}_{\widetilde{\Phi}_{\delta,\eps},\widetilde{h}} +
{\cal O}(\widetilde{h})\norm{\widetilde{u}}^2_{\widetilde{\Phi}_{\delta,\eps},\widetilde{h}}.
\end{multline*}
Coming back to the original variable $x=\sqrt{\eps}\widetilde{x}$ and using that
$$
\norm{\chi ({P}_{\eps}-\widetilde{h}z)\widetilde{u}}_{\widetilde{\Phi}_{\delta,\eps},\widetilde{h}}
= \frac{1}{\eps} \norm{\chi\left(\frac{\cdot}{\sqrt{\eps}}\right) \left(p^w(x,hD_x)-hz\right)u}_{\Phi_{\delta,\eps},h},
$$
we obtain that
\begin{multline*}
\eps \int \chi\left(\frac{x}{\sqrt{\eps}}\right) \abs{u(x)}^2
e^{-2\Phi_{\delta,\eps}(x)/h}\, L(d{x}) \\
\leq {\cal O}(1)
\norm{\chi\left(\frac{\cdot}{\sqrt{\eps}}\right)(p^w(x,hD_x)-h z)u}_{\Phi_{\delta,\eps},h}\,
\norm{u}_{\Phi_{\delta,\eps},h} +
{\cal O}(h)\norm{u}^2_{\Phi_{\delta,\eps},h}.
\end{multline*}
An application of (\ref{eq4.7}) then gives,
\begin{multline*}
\eps \int \chi\left(\frac{x}{\sqrt{\eps}}\right) \abs{u(x)}^2
e^{-2\Phi_{\delta,\eps}(x)/h}\, L(d{x})
\leq {\cal O}(1)\norm{v}_{\Phi_{\delta,\eps},h}\,\norm{u}_{\Phi_{\delta,\eps},h} \\
+ {\cal O}(1)\norm{\chi\left(\frac{\cdot}{\sqrt{\eps}}\right)R_{-,h}u_-}_{\Phi_{\delta,\eps},h}\,\norm{u}_{\Phi_{\delta,\eps},h}
+ {\cal O}(h)\norm{u}^2_{\Phi_{\delta,\eps},h}.
\end{multline*}
Using Proposition \ref{prop2.2}, together with (\ref{eq3.2.1}) and (\ref{eq4.3.1}), we easily see that
\begeq
\label{eq4.17}
\norm{\chi\left(\frac{\cdot}{\sqrt{\eps}}\right)R_{-,h}u_-}_{\Phi_{\delta,\eps},h} = {\cal O}\left(\left(\frac{h}{\eps}\right)^{\infty}\right)\abs{u_-}.
\endeq

\medskip
\noindent
Recalling that $\eps=Ah$, we obtain the following exterior estimate,
\begin{multline}
\label{eq4.18}
h \int \chi\left(\frac{x}{\sqrt{Ah}}\right) \abs{u(x)}^2
e^{-2\Phi_{\delta,\eps}(x)/h}\, L(d{x}) \leq {\cal O}(1) \norm{v}_{\Phi_{\delta,\eps},h}\,\norm{u}_{\Phi_{\delta,\eps},h} \\
+ {\cal O}(A^{-\infty}) \abs{u_-}\,\norm{u}_{\Phi_{\delta,\eps},h}
+ {\cal O}\left(\frac{h}{A}\right)\norm{u}^2_{\Phi_{\delta,\eps},h}.
\end{multline}
The estimates (\ref{eq4.10}) and (\ref{eq4.18}) are the main results established in this subsection.

\subsection{End of the proof of Theorem 1.1}

In this subsection, we shall glue together the estimates (\ref{eq4.10}) and (\ref{eq4.18}), in order to show the well-posedness of the global Grushin
problem (\ref{eq4.7}). Applying the exterior estimate (\ref{eq4.18}) to estimate the last term occurring in the right hand side of (\ref{eq4.10}) and
adding the estimates (\ref{eq4.10}) and (\ref{eq4.18}), we obtain that
\begin{multline}
h\norm{u}^2 + h^{-1}\abs{u_-}^2 \leq \frac{{\cal O}(1)}{h}\norm{v}^2 + {\cal O}(1) \norm{v}\, \norm{u} \\
+ {\cal O}(h) \abs{v_+}^2 + {\cal O}(A^{-\infty})\abs{u_-}\,\norm{u} + \left({\cal O}_A(h^2) + {\cal O}\left(\frac{h}{A}\right)\right)\norm{u}^2.
\end{multline}
Here we have also used that we arrange, as we may, that $\chi+\chi_0^2 \geq 1$ on $\comp^n$. Now
$$
{\cal O}(1) \norm{v}\, \norm{u} + {\cal O}(A^{-\infty})\abs{u_-}\,\norm{u} \leq \frac{{\cal O}(1)}{h}\norm{v}^2 +
{\cal O}(A^{-\infty})h^{-1}\abs{u_-}^2 + \frac{h}{2}\norm{u}^2,
$$
and it follows that
\begin{multline*}
\frac{h^2}{2} \norm{u}^2 + \abs{u_-}^2 \leq {\cal O}(1) \norm{v}^2 + {\cal O}(h^2)\abs{v_+}^2 \\
+ {\cal O}(A^{-\infty})\abs{u_-}^2 + \left({\cal O}_A(h^3) + {\cal O}\left(\frac{h^2}{A}\right)\right)\norm{u}^2.
\end{multline*}
Taking the parameter $A$ sufficiently large but fixed, and then restricting the attention to the interval $h\in (0,h_0]$, for some $h_0>0$ small
enough depending on $A$, we obtain that
\begeq
\label{eq5.0}
h\norm{u} + \abs{u_-} \leq {\cal O}(1)\norm{v} + {\cal O}(h)\abs{v_+}.
\endeq
Here the norms throughout are taken in the space $H_{\Phi_{\delta,\eps},h}(\comp^n)$, and according to (\ref{eq4.3.1}), the weight function $\Phi_{\delta,\eps}$ can be replaced by the standard quadratic weight $\Phi_0$, at the expense of an ${\cal O}(1)$--loss.
The Grushin operator
\begeq
\label{eq5.1}
\widetilde{{\cal P}}(z;h) = \left( \begin{array}{ccc}
\widetilde{P}-hz & R_{-,h} \\\
R_{+,h} & 0
\end{array} \right): H_{\Phi_{0},h}(\comp^n) \times \comp^{N_0} \rightarrow H_{\Phi_{0},h}(\comp^n) \times \comp^{N_0}
\endeq
is therefore injective. On the other hand, being a finite rank perturbation of the Fredholm operator
$$
\left( \begin{array}{ccc}
\widetilde{P}-hz & 0 \\\
0 & 0
\end{array} \right): H_{\Phi_0,h}(\comp^n) \times \comp^{N_0} \rightarrow
H_{\Phi_0,h}(\comp^n) \times \comp^{N_0},
$$
the operator in (\ref{eq5.1}) is also Fredholm, and furthermore, as observed in the introduction, the index is zero. It follows that the Grushin
operator in (\ref{eq5.1}) is invertible, so that the problem (\ref{eq4.7}) is well-posed, for $z\in {\rm neigh}(\lambda_0+p_1(0),\comp)$.

\medskip
\noindent
The inverse of the operator in (\ref{eq5.1}) is of the form
\begeq
\label{eq5.2}
\widetilde{{\cal E}}(z;h) = \left( \begin{array}{ccc}
E(z;h) & E_+(z;h) \\\
E_-(z;h) & E_{-+}(z;h)
\end{array} \right): H_{\Phi_{0},h}(\comp^n) \times \comp^{N_0} \rightarrow
H_{\Phi_{0},h}(\comp^n) \times \comp^{N_0},
\endeq
and it follows from (\ref{eq5.0}) that
$$
E(z;h) = {\cal O}\left(\frac{1}{h}\right): H_{\Phi_0,h}(\comp^n)\rightarrow H_{\Phi_0,h}(\comp^n),
$$
with
$$
E_+(z;h) = {\cal O}(1): \comp^{N_0}\rightarrow H_{\Phi_0,h}(\comp^n),\quad E_-(z;h)={\cal O}(1): H_{\Phi_0,h}(\comp^n)\rightarrow \comp^{N_0},
$$
and
$$
E_{-+}(z;h) = {\cal O}(h): \comp^{N_0}\rightarrow \comp^{N_0}.
$$
Furthermore, let us recall from~\cite{SjZwGrushin} that $hz$, with $z\in {\rm neigh}(\lambda_0+p_1(0),\comp)$, is an eigenvalue of $P$ precisely
when the determinant of $E_{-+}(z;h)$ vanishes.

\medskip
\noindent
In Section 11.5 of~\cite{HeSjSt}, the action of the Grushin operator in (\ref{eq5.1}), after applying
the inverse FBI--Bargmann transformation, was studied in detail, on spaces of functions of the form
$$
\left(a(x;h)e^{i\Phi(x)/h},u_-\right),
$$
where $a(x;h)$ is a symbol and the quadratic form $\Phi$ has been introduced in (\ref{eq2.23}). It was deduced there that $E_{-+}(z;h)$ has an asymptotic expansion in half--integer powers of $h$, with a certain additional structure. A complete asymptotic expansion for the determinant
of $E_{-+}(z;h)$ was subsequently obtained and it was shown that it is a classical symbol of order $0$, and complete asymptotic expansions for the
zeros of the determinant were obtained using Puiseux series. That discussion goes through without any changes in the present situation, and therefore, repeating the arguments of Section 11.5 of~\cite{HeSjSt} as they stand, we obtain that the eigenvalues of $h^{-1}P$
in a sufficiently small but fixed \neigh{} of $\lambda_0 + p_1(0)$ have complete asymptotic expansions in powers of $h^{1/N_0}$, of the form
$$
\lambda(h) = \lambda_0 + p_1(0) + c_1 h^{1/N_0} + c_2 h^{2/N_0}+\ldots.
$$
On the other hand, from the main result of~\cite{HiPr2} and the discussion in the introduction, we know that for all $h>0$ small enough, the spectrum of $P$ in the disc $D(0,Ch)$, is contained in the union of the regions
$$
D\left(h(\lambda_0+p_1(0)), \frac{h}{\widetilde{C}}\right),
$$
where $\lambda_0$ is an eigenvalue of $q(x,D_x)$ with $\abs{\lambda_0 + p_1(0)}<C$, and $\widetilde{C}>0$ is a sufficiently large constant.
The statement of Theorem \ref{theo1} follows and this completes the proof.

\section{Proof of Theorem~\ref{theo}}
\setcounter{equation}{0}

We shall begin this section by explaining that it is actually sufficient to establish Theorem~\ref{theo} in the special case when $m=1$. Indeed,
when assuming that Theorem~\ref{theo} has already been proved when $m=1$, we may consider an order function $m \geq 1$ as in (\ref{eq1.1})
such that $m \in S(m)$; and a symbol $P(x,\xi;h)$ satisfying the associated assumptions of Theorem~\ref{theo}. Then, one can choose a
symbol $\widetilde{p}_0\in S(1)$ with a non-negative real part $\textrm{Re } \widetilde{p}_0\geq 0$ which is elliptic near infinity in
the symbol class $S(1)$; and such that $\widetilde{p}_0=p_0$ on a large compact set containing $p_0^{-1}(0)$ where $p_0$ stands for the
principal symbol of $P(x,\xi;h)$. This is for instance the case when taking $\chi_0\in C^{\infty}_0(\real^{2n};[0,1])$ such that $\chi_0=1$
near $p_0^{-1}(0)$ and setting
$$\widetilde{p}_0=\chi_0 p_0 + (1-\chi_0).$$
Defining also the symbols
$$\widetilde{p}_j=\chi_0 p_j + (1-\chi_0) \in S(1),$$
when $j \geq 1$, we may choose $\chi\in C^{\infty}_0(\real^{2n},[0,1])$ such that $\chi=1$ near $p_0^{-1}(0)$ and $\chi_0=1$ near $\rm{supp}\, \chi$.
By setting $P=P^w(x,hD_x;h)$ and $\tilde{P}=\tilde{P}^w(x,hD_x;h)$, where
$$\tilde{P}(x,\xi;h) \sim \sum_{j=0}^{+\infty}{\tilde{p}_j(x,\xi)h^j},$$
in the symbol class $S(1)$;
and using $L^2$--norms throughout, we deduce from the semiclassical elliptic regularity that
\begin{align*}
& \ \qquad h^{\frac{2k_0}{2k_0+1}}|z|^{\frac{1}{2k_0+1}} \|u\| \\
\leq & \  h^{\frac{2k_0}{2k_0+1}}|z|^{\frac{1}{2k_0+1}} \|\chi^w(x,hD_x) u\|+h^{\frac{2k_0}{2k_0+1}}|z|^{\frac{1}{2k_0+1}}\|(1-\chi)^w(x,hD_x)u\| \\
\leq & \ h^{\frac{2k_0}{2k_0+1}}|z|^{\frac{1}{2k_0+1}}\|\chi^w(x,hD_x) u\|+\mathcal{O}(h^{\frac{2k_0}{2k_0+1}}|z|^{\frac{1}{2k_0+1}}) \|(P-z)u\|+\mathcal{O}(h^{\infty})\|u\|,
\end{align*}
when $|z| \leq C_0$, for $0< C_0 \ll 1$,
since the principal symbol $p_0$ of the operator $P$ is elliptic near the support of the function $1-\chi$. By using that Theorem~\ref{theo} is
valid when $m=1$, we may apply it to the operator $\widetilde{P}$ to get that if $z$ is as in Theorem~\ref{theo},
\begin{align}
\label{eq8.1}
h^{\frac{2k_0}{2k_0+1}}|z|^{\frac{1}{2k_0+1}} \|\chi^w(x,hD_x) u\| \leq & \ \mathcal{O}(1)\|(\widetilde{P}-z)\chi^w(x,hD_x) u\| \\ \nonumber
\leq & \  \mathcal{O}(1)\|(P-z)\chi^w(x,hD_x) u\|+\mathcal{O}(h^{\infty})\|u\|,
\end{align}
since $(\widetilde{P}-P)\chi^w(x,hD_x) = \mathcal{O}(h^{\infty})$ in $\mathcal{L}(L^2)$ when $h \rightarrow 0^+$.
We get that
\begin{multline}
\label{eq8.2}
h^{\frac{2k_0}{2k_0+1}}|z|^{\frac{1}{2k_0+1}} \|\chi^w(x,hD_x) u\| \leq \mathcal{O}(1)\|(P-z)u\|+\mathcal{O}(1)\|[P,\chi^w(x,hD_x)]u\|\\
+\mathcal{O}(h^{\infty})\|u\|.
\end{multline}
When estimating the commutator term in the right hand side of (\ref{eq8.2}), we take $\widetilde{\chi}\in C^{\infty}_0(\real^{2n},[0,1])$ such
that $\widetilde{\chi}=1$ near $p_0^{-1}(0)$ and $\chi=1$ near $\rm{supp}\, \widetilde{\chi}$. Then, by using that
$$[P,\chi^w(x,hD_x)]\widetilde{\chi}^w(x,hD_x)=\mathcal{O}(h^{\infty}),$$
in $\mathcal{L}(L^2)$, together with the fact that $p_0$ is elliptic near the support of $1-\widetilde{\chi}$, we get that
\begin{equation}
\label{eq8.3}
h^{\frac{2k_0}{2k_0+1}}|z|^{\frac{1}{2k_0+1}} \|\chi^w(x,hD_x) u\| \leq \mathcal{O}(1)\|(P-z)u\| + \mathcal{O}(h^{\infty})\|u\|,
\end{equation}
which in view of previous estimates completes the proof of the reduction to the case when $m=1$. In what follows, we shall therefore be concerned
exclusively with the case when $m=1$.

Consider $p_0$ a symbol in the class $S(1)$ independent of the semiclassical parameter with a non-negative real part
$$\textrm{Re }p_0 \geq 0.$$
We assume that all the hypothesis
(\ref{eq1.5}), (\ref{eq1.6}), (\ref{eq1.6.5}), (\ref{re1}) and (\ref{ning2}) are fulfilled; and study the operator $p_0^w(x,hD_x)$
defined by the $h$-Weyl quantization of the symbol $p_0(x,\xi)$ with the following choice for the normalization of the Weyl quantization
\begin{equation}\label{quant1}
p_0^w(x,hD_x)u(x)=\int_{{\bf R}^{2n}}e^{2\pi i(x-y).\xi}p_0\Big(\frac{x+y}{2},h\xi\Big)u(y)dyd\xi.
\end{equation}
This normalization differs from the one considered in (\ref{quant}); but, of course, it is completely equivalent, after a rescaling of the
semiclassical parameter, to prove Theorem~\ref{theo} with the normalizations (\ref{quant})  or (\ref{quant1}). This choice is for convenience only.
Writing
$$p_0(X_j+Y)=q_j(Y)+\mathcal{O}(Y^3),$$
when $Y \rightarrow 0$; where $q_j$ denotes the quadratic approximation which begins the Taylor expansion of the symbol $p_0$ at the
doubly characteristic point $X_j$ and recalling (\ref{ning1}) and (\ref{ning3}); we notice that the following intersections of kernels are zero
\begin{equation}
\label{msj2}
\Big(\bigcap_{l=0}^{k_0}\textrm{Ker}\big[\textrm{Re }F_j(\textrm{Im }F_j)^l \big]\Big) \cap \real^{2n}=\{0\},
\end{equation}
for any $1 \leq j \leq N$.
One can deduce, as in \cite{Karel11}, that these properties imply that the following sums of $k_0+1$ non-negative quadratic forms
\begin{equation}
\label{msj5}
\sum_{l=0}^{k_0}\textrm{Re }q_j\big((\textrm{Im }F_j)^lX\big),
\end{equation}
when $1 \leq j \leq N$, are all positive definite. Indeed, let $X_0 \in \real^{2n}$ be such that
$$\sum_{l=0}^{k_0}\textrm{Re }q_j\big((\textrm{Im }F_j)^lX_0\big)=0.$$
The non-negativity of the quadratic form $\textrm{Re }q_j$ implies that for all $l=0,...,k_0$,
\begin{equation}
\label{msj1}
\textrm{Re }q_j\big((\textrm{Im }F_j)^lX_0\big)=0.
\end{equation}
By denoting $\textrm{Re }q_j(X;Y)$ the polar form associated to $\textrm{Re }q_j$, we deduce from the Cauchy-Schwarz inequality, (\ref{10}) and
(\ref{msj1}) that for all $l=0,...,k_0$ and $Y \in \real^{2n}$,
\begin{align*}
\big|\textrm{Re }q_j\big(Y;(\textrm{Im }F_j)^lX_0\big)\big|^2= & \ \big|\sigma\big(Y,\textrm{Re }F_j(\textrm{Im }F_j)^lX_0\big)\big|^2 \\
\leq & \ \textrm{Re }q_j(Y)\textrm{Re }q_j\big((\textrm{Im }F_j)^lX_0\big)=0.
\end{align*}
It follows that for all $l=0,...,k_0$ and $Y \in \real^{2n}$,
$$\sigma\big(Y,\textrm{Re }F_j(\textrm{Im }F_j)^lX_0\big)=0,$$
which implies that for all $l=0,...,k_0$,
$$\textrm{Re }F_j(\textrm{Im }F_j)^lX_0=0,$$
since $\sigma$ is non-degenerate. We finally obtain from (\ref{msj2}) that $X_0=0$, which proves that the quadratic forms (\ref{msj5}) are
positive definite. We then deduce from Proposition~2.0.1 in \cite{Karel11} that there exist real-valued weight functions
\begin{equation}
\label{msj0}
g_j \in S\big(1,\langle X\rangle^{-\frac{2}{2k_0+1}}dX^2\big),
\end{equation}
and positive constants $c_{1,j}$ and $c_{2,j}$ such that for all $X \in \real^{2n}$,
\begin{equation}
\label{msj3}
\textrm{Re }q_j(X)+c_{1,j}H_{\textrm{Im}q_j}\ g_j(X)+1 \geq c_{2,j} \langle X \rangle^{\frac{2}{2k_0+1}} \geq c_{2,j} |X|^{\frac{2}{2k_0+1}},
\end{equation}
where $H_{\textrm{Im}q_j}$ denotes the Hamilton vector field of $\textrm{Im }q_j$.
We use here the usual notation $S\big(\tilde{m}_{h},\tilde{M}_{h}^{-2}dX^2\big)$, where $\tilde{m}_h$ and $\tilde{M}_h$ are positive functions depending on the semiclassical parameter $h$, to stand for the symbol class
\begin{multline*}
S\big(\tilde{m}_h,\tilde{M}_h^{-2}dX^2\big)=\Big\{ a_h\in C^{\infty}(\real^{2n},\comp): \forall \alpha \in \nat^{2n}, \exists C_{\alpha}>0, \\
\forall X \in \real^{2n}, \forall \ 0<h \leq 1, \  |\partial_X^{\alpha} a_h(X)|
\leq C_{\alpha} \tilde{m}_h(X) \tilde{M}_h(X)^{-|\alpha|}\Big\}.
\end{multline*}
Setting
\begin{equation}\label{msj4}
g_{j,h}(X)=g_j\Big(\frac{X}{\sqrt{h}}\Big),
\end{equation}
for $0 <h \leq 1$; it follows from (\ref{msj3}) and the homogeneity properties of the quadratic form $q_j$ that for all $X \in \real^{2n}$ and
$0 < h \leq 1$,
\begin{multline}
\label{msj10}
h \textrm{Re }q_j\Big(\frac{X}{\sqrt{h}}\Big) +
c_{1,j}h(H_{\textrm{Im}q_j}\ g_j)\Big(\frac{X}{\sqrt{h}}\Big)+h =\textrm{Re }q_j(X)\\ +c_{1,j}h  (H_{\textrm{Im}q_j}\ g_{j,h})(X) +h
\geq c_{2,j} h^{\frac{2k_0}{2k_0+1}}|X|^{\frac{2}{2k_0+1}},
\end{multline}
since
\begin{multline*}
(H_{\textrm{Im}q_j}\ g_{j,h})(X)=\Big\{\textrm{Im }q_j(X),g_j\Big(\frac{X}{\sqrt{h}}\Big)\Big\}=
\Big\{h \textrm{Im }q_j\Big(\frac{X}{\sqrt{h}}\Big),g_j\Big(\frac{X}{\sqrt{h}}\Big)\Big\}\\
=h \Big\{\textrm{Im }q_j\Big(\frac{X}{\sqrt{h}}\Big),g_j\Big(\frac{X}{\sqrt{h}}\Big)\Big\}
= \{\textrm{Im }q_j,g_j\}\Big(\frac{X}{\sqrt{h}}\Big)=(H_{\textrm{Im}q_j}\ g_j)\Big(\frac{X}{\sqrt{h}}\Big),
\end{multline*}
where $\{p,q\}$ stands for the Poisson bracket
$$\{p,q\}=\frac{\partial p}{\partial \xi}.\frac{\partial q}{\partial x}-\frac{\partial p}{\partial x}.\frac{\partial q}{\partial \xi}.$$
Since $p_0 \in S(1)$, it follows from (\ref{eq1.6.5}) that there exists $c_{3} \geq 1$ such that for all $1 \leq j \leq N$ and $X \in \real^{2n}$,
\begin{equation}
\label{msj-3.14}
|p_0(X)| \leq c_{3} |X-X_j|^2
\end{equation}
and
\begin{equation}
\label{msj-3}
|p_0(X)-z| \geq \frac{|z|}{2} \textrm{ when } |X-X_j|^2 \leq \frac{|z|}{2c_{3}}.
\end{equation}
Recalling the assumption (\ref{re1}), one can find a positive constant $c_4>0$ such that for all $1 \leq j \leq N$ and $|X| \leq c_4$,
\begin{equation}
\label{re2}
r_j(X) \in \Gamma,
\end{equation}
where $r_j$ are the symbols defined in (\ref{hel1}), and $\Gamma$ is a closed angular sector with vertex at 0 included in the right open half-plane
$$\Gamma \setminus \{0\} \subset \big\{z \in \comp: \textrm{Re }z > 0\big\}.$$
One may assume that
\begin{equation}\label{ash1}
0 <c_4<\inf_{\substack{j,k=1,...,N\\ j \neq k}}|X_j-X_k|.
\end{equation}
One can therefore find a positive constant $c_5$ such that
\begin{equation}\label{msj6}
\forall \ 1 \leq j \leq N,\,\, \forall |X| \leq c_4, \ |\textrm{Im }r_j(X)| \leq c_5 \textrm{Re }r_j(X).
\end{equation}
Let $\psi$ be a $C_0^{\infty}(\real^{2n},[0,1])$ function such that
\begin{equation}\label{msj6b1}
\psi(X)=1, \textrm{ when } |X| \leq \frac{c_4}{2}; \textrm{ and } \textrm{supp }\psi \subset \big\{X \in \real^{2n} :  |X| \leq c_4\big\}.
\end{equation}
Setting
\begin{equation}\label{msj6b2}
\tilde{r}_j=\psi r_j,
\end{equation}
and recalling the well-known inequality
\begin{equation}\label{msj-1.5}
|f'(x)|^2 \leq 2 f(x)\|f''\|_{L^{\infty}({\bf R})},
\end{equation}
fulfilled by any non-negative smooth function $f$ with a bounded second derivative, we deduce from (\ref{msj6}) and (\ref{msj6b1}) that there
exists a positive constant $c_6$ such that for all $1 \leq j \leq N$ and $X \in \real^{2n}$,
\begin{equation}\label{msj8}
|\nabla \textrm{Re }\tilde{r}_j(X)| \leq c_6 \sqrt{\textrm{Re }\tilde{r}_j(X)}
\end{equation}
and
$$|c_5 \nabla \textrm{Re }\tilde{r}_j(X)-\nabla \textrm{Im }\tilde{r}_j(X)| \leq c_6 \sqrt{c_5\textrm{Re }\tilde{r}_j(X)-\textrm{Im }\tilde{r}_j(X)}.$$
It follows that
\begin{multline}
\label{msj9}
|\nabla \textrm{Im }\tilde{r}_j(X)| \leq |c_5 \nabla \textrm{Re }\tilde{r}_j(X)-
\nabla \textrm{Im }\tilde{r}_j(X)|+c_5| \nabla \textrm{Re }\tilde{r}_j(X)| \\
\leq  c_6 \sqrt{c_5 \textrm{Re }\tilde{r}_j(X)-\textrm{Im }\tilde{r}_j(X)}+c_5c_6\sqrt{\textrm{Re }\tilde{r}_j(X)},
\end{multline}
for all $X \in \real^{2n}$. We deduce from (\ref{hel1}), (\ref{msj10}), (\ref{msj6b1}) and (\ref{msj6b2}) that for all
$|Y| \leq \frac{c_4}{2}$ and $0<h \leq 1$,
\begin{multline}
\label{msj11}
\textrm{Re }p_0(X_j+Y)+c_{1,j}h  \big\{\textrm{Im }p_0(X_j+Y),g_{j,h}(Y)\big\} -
\textrm{Re }\tilde{r}_j(Y)\\ -c_{1,j}h  (H_{\textrm{Im}\tilde{r}_j}\ g_{j,h})(Y) +   h
\geq c_{2,j} h^{\frac{2k_0}{2k_0+1}}|Y|^{\frac{2}{2k_0+1}}.
\end{multline}
Since from (\ref{msj0}), (\ref{msj4}) and (\ref{msj9}),
\begin{multline*}
h  |(H_{\textrm{Im}\tilde{r}_j}\ g_{j,h})(X)| \lesssim h |\nabla \textrm{Im }\tilde{r}_j(X)||\nabla g_{j,h}(X)| \lesssim
\sqrt{h} |\nabla \textrm{Im }\tilde{r}_j(X)|\\
\lesssim   c_6 \sqrt{h} \sqrt{c_5 \textrm{Re }\tilde{r}_j(X)-\textrm{Im }\tilde{r}_j(X)}+c_5c_6 \sqrt{h}\sqrt{\textrm{Re }\tilde{r}_j(X)},
\end{multline*}
it follows from (\ref{msj11}) that there exists a positive constant $c_7$ such that for all $|Y|  \leq \frac{c_4}{2}$ and $0< h \leq 1$,
\begin{multline}
\label{msj12}
\textrm{Re }p_0(X_j+Y)+c_{1,j}h \big\{\textrm{Im }p_0(X_j+Y),g_{j,h}(Y)\big\} +  2h + c_7 \textrm{Re }\tilde{r}_j(Y)\\
+c_7 \big(c_5 \textrm{Re }\tilde{r}_j(Y)-\textrm{Im }\tilde{r}_j(Y) \big)
\geq c_{2,j} h^{\frac{2k_0}{2k_0+1}}|Y|^{\frac{2}{2k_0+1}}.
\end{multline}
Notice from (\ref{kps1}), (\ref{hel1}), (\ref{msj6}), (\ref{msj6b1}) and (\ref{msj6b2}), that for all $|Y| \leq \frac{c_4}{2}$,
\begin{multline*}
c_7 \textrm{Re }\tilde{r}_j(Y) +c_7 \big(c_5 \textrm{Re }\tilde{r}_j(Y)-\textrm{Im }\tilde{r}_j(Y) \big) \leq (2c_5+1)c_7\textrm{Re }\tilde{r}_j(Y)\\
\leq (2c_5+1)c_7\big(\textrm{Re }q_j(Y)+\textrm{Re }\tilde{r}_j(Y)\big)=(2c_5+1)c_7 \textrm{Re }p_0(X_j+Y).
\end{multline*}
It follows that for all $|Y|  \leq \frac{c_4}{2}$ and $0< h \leq 1$,
\begin{multline}
\label{msj12.33}
\big(1+(2c_5+1)c_7\big)\textrm{Re }p_0(X_j+Y)+c_{1,j}h \big\{\textrm{Im }p_0(X_j+Y),g_{j,h}(Y)\big\} +  2h  \\
\geq c_{2,j} h^{\frac{2k_0}{2k_0+1}}|Y|^{\frac{2}{2k_0+1}}.
\end{multline}
Let $C_0 \geq 1$ be a fixed constant. Then, by introducing the real-valued weight function
\begin{equation}
\label{msj21b}
g_{h}(X)= \sum_{j=1}^N{c_{1,j}\psi\big(2(X-X_j)\big)g_{j,h}(X-X_j)},
\end{equation}
where $\psi$ is the function defined in (\ref{msj6b1}); and noticing from (\ref{msj0}) and (\ref{msj4}) that
$$h\Big(\sum_{j=1}^Nc_{1,j}g_{j,h}(X-X_j)\Big)H_{\textrm{Im}p_0}\big[\psi\big(2(X-X_j)\big)\big]=\mathcal{O}(h),$$
we deduce from (\ref{eq1.5}), (\ref{eq1.6}), (\ref{ash1}) and (\ref{msj12.33})
that there exist some positive constant $c_8$, $c_9$ and $h_{0}$ such that for all $X \in \real^{2n}$ and $0<h \leq h_0$,
\begin{equation}
\label{msj20.5}
\textrm{Re }p_0(X)+h (H_{\textrm{Im}p_0} \ g_{h})(X)  +  c_8 h  \geq  c_{9} h^{\frac{2k_0}{2k_0+1}}\min\big[C_0^{\frac{1}{2}},(4c_3)^{\frac{1}{2}}
\delta(X)\big]^{\frac{2}{2k_0+1}},
\end{equation}
where $\delta$ stands for the distance to the set $(\textrm{Re }p_0)^{-1}(0)$.
Since $\textrm{Re }p_0 \geq 0$, we may also assume according to (\ref{msj0}), (\ref{msj3}), (\ref{msj4}) and (\ref{msj21b}) that
\begin{equation}\label{msj-1}
 \sup_{X \in {\bf R}^{2n}} |g_{h}(X)| \leq \frac{3}{4\pi},
\end{equation}
for all $0<h \leq h_0$.
Let $z$ be in $\comp$ and $0<h \leq h_0$. We shall use a multiplier method inspired by the one used by F.~H\'erau, J.~Sj\"ostrand and C.~Stolk in
\cite{HeSjSt}. By using the Wick quantization whose definition and properties are recalled in Section~\ref{Wick}, one can write that
\begin{align}
\label{msj13.1}
& \ \textrm{Re}\big([p_0(\sqrt{h}X)-z]^{\textrm{Wick}}u,[2- g_h(\sqrt{h}X)]^{\textrm{Wick}}u\big) \\ \nonumber
=& \ \textrm{Re}\big([2- g_h(\sqrt{h}X)]^{\textrm{Wick}}[p_0(\sqrt{h}X)-z]^{\textrm{Wick}}u,u\big)\\ \nonumber
=& \ \big(\textrm{Re}\big([2- g_h(\sqrt{h}X)]^{\textrm{Wick}}[p_0(\sqrt{h}X)-z]^{\textrm{Wick}}\big)u,u\big).
\end{align}
since real Hamiltonians get quantized in the Wick quantization by formally selfadjoint operators on $L^2(\real^n)$.
Notice also from (\ref{msj0}), (\ref{msj4}) and (\ref{msj21b}) that
\begin{equation}\label{msj13.2}
g_h(\sqrt{h}X) \in S(1,dX^2),
\end{equation}
uniformly with respect to the parameter $0<h \leq h_0$.
We deduce from symbolic calculus in the Wick quantization (\ref{lay4}) that
\begin{align}
\label{ng1}
& \ \textrm{Re}\big([2- g_h(\sqrt{h}X)]^{\textrm{Wick}} [p_0(\sqrt{h}X)-z]^{\textrm{Wick}}\big) \\ \nonumber
= \Big[\big(& \ 2- g_h(\sqrt{h}X)\big)\big(\textrm{Re }p_0(\sqrt{h}X)-\textrm{Re }z\big)
+\frac{\sqrt{h}}{4\pi}\nabla\big(g_h(\sqrt{h}X)\big).(\nabla \textrm{Re }p_0)(\sqrt{h}X)\\ \nonumber  + & \
\frac{1}{4\pi} h(H_{\textrm{Im}p_0}\ g_h)(\sqrt{h}X)\Big]^{\textrm{Wick}}+S_h,
\end{align}
with $\|S_h\|_{\mathcal{L}(L^2)}=\mathcal{O}(h)$.
Since from (\ref{msj-1.5}) and (\ref{msj13.2}), we have
$$\Big|\frac{\sqrt{h}}{4\pi}\nabla\big(g_h(\sqrt{h}X)\big).(\nabla \textrm{Re }p_0)(\sqrt{h}X)\Big| \lesssim
\sqrt{h} \sqrt{\textrm{Re }p_0(\sqrt{h}X)} \leq  \textrm{Re }p_0(\sqrt{h}X)+ \mathcal{O}(h),$$
it follows from (\ref{msj-1}) that there exists a positive constant $c_{10}$ such that for all $X \in \real^{2n}$ and $0<h \leq h_0$,
\begin{multline*}
\big(2-g_h(\sqrt{h}X)\big)\big(\textrm{Re }p_0(\sqrt{h}X)-\textrm{Re }z\big)+\frac{\sqrt{h}}{4\pi}\nabla\big(g_h(\sqrt{h}X)\big).
(\nabla \textrm{Re }p_0)(\sqrt{h}X)\\ +\frac{1}{4\pi} h(H_{\textrm{Im}p_0}\ g_h)(\sqrt{h}X)
\geq  \\
\frac{1}{4\pi}\textrm{Re }p_0(\sqrt{h}X)+\frac{1}{4\pi} h(H_{\textrm{Im}p_0}\ g_h)(\sqrt{h}X)-c_{10} h - \frac{9}{4} \max(0,\textrm{Re }z).
\end{multline*}
It follows from (\ref{msj20.5}) that for all $X \in \real^{2n}$ and $0<h \leq h_0$,
\begin{align*}
 & \ \big(2-g_h(\sqrt{h}X)\big)\big(\textrm{Re }p_0(\sqrt{h}X)-\textrm{Re }z\big)+\frac{\sqrt{h}}{4\pi}\nabla\big(g_h(\sqrt{h}X)\big).
 (\nabla \textrm{Re }p_0)(\sqrt{h}X)\\
 & \ +\frac{1}{4\pi} h(H_{\textrm{Im}p_0}\ g_h)(\sqrt{h}X) \geq  - \Big(\frac{1}{4\pi}c_8+c_{10}\Big) h -\frac{9}{4} \max(0,\textrm{Re }z)\\
  & \
 + \frac{1}{4\pi}c_{9} h^{\frac{2k_0}{2k_0+1}}\min\big[C_0^{\frac{1}{2}},(4c_3)^{\frac{1}{2}}\delta(\sqrt{h}X)\big]^{\frac{2}{2k_0+1}}.
\end{align*}
We then obtain that for all $X \in \real^{2n}$ and $0<h \leq h_0$,
\begin{align*}
& \ \big(2-g_h(\sqrt{h}X)\big)\big(\textrm{Re }p_0(\sqrt{h}X)-\textrm{Re }z\big)+\frac{\sqrt{h}}{4\pi}\nabla\big(g_h(\sqrt{h}X)\big).
(\nabla \textrm{Re }p_0)(\sqrt{h}X)\\
& \ +\frac{1}{4\pi} h(H_{\textrm{Im}p_0}\ g_h)(\sqrt{h}X)  \geq   \frac{1}{8\pi} c_{9} h^{\frac{2k_0}{2k_0+1}}|z|^{\frac{1}{2k_0+1}}\\
& \ +\frac{1}{4\pi} c_{9} h^{\frac{2k_0}{2k_0+1}}
\Big(\min\big[C_0^{\frac{1}{2}},(4c_3)^{\frac{1}{2}}\delta(\sqrt{h}X)\big]^{\frac{2}{2k_0+1}}-|z|^{\frac{1}{2k_0+1}}\Big)\\
& \ +\frac{9}{4}\Big(\frac{1}{18\pi}c_{9} h^{\frac{2k_0}{2k_0+1}}|z|^{\frac{1}{2k_0+1}} -\max(0,\textrm{Re }z)\Big)-\Big(\frac{1}{4\pi}c_8+c_{10}\Big) h.
\end{align*}
Considering the set
\begin{equation}
\label{msj14}
\Omega_{C,h}=\Big\{z \in \comp: \textrm{Re }z \leq \frac{1}{18\pi}c_{9} h^{\frac{2k_0}{2k_0+1}}|z|^{\frac{1}{2k_0+1}}, \ Ch \leq |z| \leq C_0  \Big\},
\end{equation}
where $C \gg 1$ is a large constant whose value will be chosen later, and $\varphi \in C_0^{\infty}(\real,[0,1])$ such that
\begin{equation}\label{msj-4}
\varphi(X)=1 \textrm{ when } |X| \leq \frac{1}{4c_3}, \textrm{ and } \textrm{supp } \varphi \subset
\Big\{ X \in \real: |X| \leq \frac{1}{3c_3}\Big\},
\end{equation}
we notice that for all $X \in \real^{2n}$, $0<h \leq h_0$, $C \geq 1$ and $z \in \Omega_{C,h}$,
\begin{multline*}
 \frac{1}{4\pi} c_{9} h^{\frac{2k_0}{2k_0+1}}
\Big(\min\big[C_0^{\frac{1}{2}},(4c_3)^{\frac{1}{2}}\delta(\sqrt{h}X)\big]^{\frac{2}{2k_0+1}}-|z|^{\frac{1}{2k_0+1}}\Big)\\ +  \frac{9}{4}\Big(\frac{1}{18\pi}c_{9} h^{\frac{2k_0}{2k_0+1}}|z|^{\frac{1}{2k_0+1}} -\max(0,\textrm{Re }z)\Big)
 \geq  -\frac{1}{4\pi} c_{9} h^{\frac{2k_0}{2k_0+1}}|z|^{\frac{1}{2k_0+1}}\varphi\Big(\frac{\delta(\sqrt{h}X)^2}{|z|}\Big).
\end{multline*}
By noticing now that one can find a $C_0^{\infty}(\real^{2n},[0,1])$ function $\Phi$ such that
\begin{equation}\label{msj-4.11}
\Phi(X)=1 \textrm{ when } |X| \leq \frac{1}{\sqrt{4c_3}}, \textrm{ and } \textrm{supp } \Phi \subset  \Big\{ X \in \real^{2n} :
|X| \leq \frac{1}{\sqrt{3c_3}}\Big\};
\end{equation}
verifying for all $X \in \real^{2n}$, $0<h \leq h_0$ and $z \in \Omega_{C,h}$,
$$\varphi\Big(\frac{\delta(\sqrt{h}X)^2}{|z|}\Big) \leq \sum_{j=1}^N\Phi\Big(\frac{\sqrt{h}X-X_j}{\sqrt{|z|}}\Big),$$
we get that for all $X \in \real^{2n}$, $0<h \leq h_0$, $C \geq 1$ and $z \in \Omega_{C,h}$,
\begin{align*}
& \ \big(2-g_h(\sqrt{h}X)\big)\big(\textrm{Re }p_0(\sqrt{h}X)-\textrm{Re }z\big) \\
 + & \ \frac{\sqrt{h}}{4\pi}\nabla\big(g_h(\sqrt{h}X)\big).(\nabla \textrm{Re }p_0)(\sqrt{h}X) + \frac{1}{4\pi} h(H_{\textrm{Im}p_0}\ g_h)(\sqrt{h}X)  \\ \geq & \   \frac{1}{8\pi}c_{9}
 h^{\frac{2k_0}{2k_0+1}}|z|^{\frac{1}{2k_0+1}} -\Big(\frac{1}{4\pi}c_8+c_{10}\Big) h
  -\frac{1}{4\pi} c_{9} h^{\frac{2k_0}{2k_0+1}}|z|^{\frac{1}{2k_0+1}}\sum_{j=1}^N\Phi\Big(\frac{\sqrt{h}X-X_j}{\sqrt{|z|}}\Big).
\end{align*}
It follows from (\ref{msj13.1}), (\ref{ng1}) and (\ref{lay0.5}) that there exist some positive constants $c_{11}$ and $c_{12}$ such that for all
$0<h \leq h_0$, $C \geq 1$, $z \in \Omega_{C,h}$ and $u \in \mathcal{S}(\real^{n})$,
\begin{multline}
\label{msj17}
 \textrm{Re}\big([p_0(\sqrt{h}X)-z]^{\textrm{Wick}}u,[2- g_h(\sqrt{h}X)]^{\textrm{Wick}}u\big)+c_{11} h\|u\|_{L^2}^2 \\
  + c_{11} h^{\frac{2k_0}{2k_0+1}}|z|^{\frac{1}{2k_0+1}}\sum_{j=1}^N\Big(\Phi\Big(\frac{\sqrt{h}X-X_j}{\sqrt{|z|}}\Big)^{\textrm{Wick}}u,u\Big)
  \geq  c_{12} h^{\frac{2k_0}{2k_0+1}}|z|^{\frac{1}{2k_0+1}}\|u\|_{L^2}^2.
\end{multline}
Recalling (\ref{msj13.2}) and (\ref{msj14}), we deduce from the Cauchy-Schwarz inequality and (\ref{lay0}) that
there exist some positive constants $c_{13}$, $c_{14}$ and $c_{15}$ such that
for all $0<h \leq h_0$, $C \geq c_{13}$, $z \in \Omega_{C,h}$ and $u \in \mathcal{S}(\real^{n})$,
\begin{multline}
\label{msj18}
 c_{15} h^{\frac{2k_0}{2k_0+1}}|z|^{\frac{1}{2k_0+1}}\sum_{j=1}^N\Big\|\Phi\Big(\frac{\sqrt{h}X-X_j}{\sqrt{|z|}}\Big)^{\textrm{Wick}}u\Big\|_{L^2} \\
+ \big\|p_0(\sqrt{h}X)^{\textrm{Wick}}u-zu\|_{L^2} \geq  c_{14} h^{\frac{2k_0}{2k_0+1}}|z|^{\frac{1}{2k_0+1}}\|u\|_{L^2}.
\end{multline}
Since from (\ref{msj-4.11}), we have
\begin{equation}\label{msj19.1}
\Phi\Big(\frac{\sqrt{h}X-X_j}{\sqrt{|z|}}\Big) \in S\Big(1,\frac{|z|}{h}dX^2\Big),
\end{equation}
when $1 \leq j \leq N$, we notice from (\ref{msj14}), (\ref{lay1}) and (\ref{lay2}) that
$$
\Big\|\Phi\Big(\frac{\sqrt{h}X-X_j}{\sqrt{|z|}}\Big)^{\textrm{Wick}}u\Big\|_{L^2}=\Big\|\Phi\Big(\frac{\sqrt{h}X-X_j}{\sqrt{|z|}}\Big)^wu\Big\|_{L^2}+
\mathcal{O}\Big(\frac{1}{C}\Big)\|u\|_{L^2}
$$
and
$$\big\|p_0(\sqrt{h}X)^{\textrm{Wick}}u-zu\|_{L^2}=\big\|p_0(\sqrt{h}X)^wu-zu\|_{L^2}+\mathcal{O}(h)\|u\|_{L^2}.$$
We deduce from (\ref{msj18}) that there exist some positive constants $c_{16}$ and $c_{17}$ such that for all $0<h \leq h_0$, $C \geq c_{17}$,
$z \in \Omega_{C,h}$ and $u \in \mathcal{S}(\real^{n})$,
\begin{multline}
\label{msj18.12}
 c_{15} h^{\frac{2k_0}{2k_0+1}}|z|^{\frac{1}{2k_0+1}}\sum_{j=1}^N\Big\|\Phi\Big(\frac{\sqrt{h}X-X_j}{\sqrt{|z|}}\Big)^w u\Big\|_{L^2}
+ \big\|p_0(\sqrt{h}X)^w u-zu\|_{L^2}\\ \geq  c_{16} h^{\frac{2k_0}{2k_0+1}}|z|^{\frac{1}{2k_0+1}}\|u\|_{L^2}.
\end{multline}
We shall now study the quantity
$$\sum_{j=1}^N\Big\|\Phi\Big(\frac{\sqrt{h}X-X_j}{\sqrt{|z|}}\Big)^wu\Big\|_{L^2}.$$
To do so, we shall establish an a priori estimate similar to the one proved in~\cite{HeSjSt} (Proposition~4.1), namely that for all
$0<h \leq h_0$, $C \geq c_{17}$, $z \in \Omega_{C,h}$ and $u \in \mathcal{S}(\real^n)$,
\begin{equation}\label{msj20}
\frac{1}{|z|}\|p_0(\sqrt{h}X)^wu-zu\|_{L^2}+\mathcal{O}\Big(\sqrt{\frac{h}{|z|}}\Big)\|u\|_{L^2}
\gtrsim \sum_{j=1}^N\Big\|\Phi\Big(\frac{\sqrt{h}X-X_j}{\sqrt{|z|}}\Big)^wu\Big\|_{L^2}.
\end{equation}
In \cite{HeSjSt}, this estimate is proved on the FBI transform side. By using similar arguments, namely a second microlocalization,
we shall prove this estimate directly without any use of the FBI transform.

Let $\Psi$ be a $C_0^{\infty}(\real^{2n},[0,1])$ function such that
$$
\Psi=1 \textrm{ when } |Y| \leq \frac{1}{\sqrt{3c_3}}; \textrm{ and } \textrm{supp }
\Psi \subset \Big\{Y \in \real^{2n} : |Y| \leq \frac{1}{\sqrt{2c_3}}\Big\}.
$$
We notice from (\ref{msj-3.14}) and (\ref{msj-3}) that the symbols
$$
\frac{1}{|z|}p_0(\sqrt{|z|}Y+X_j)\Psi(Y),
$$
where $1 \leq j \leq N$, are uniformly bounded together with all their derivatives with respect to the parameter $z$ when $z$ belongs to
$\Omega_{C,h}$; and that these symbols are elliptic
$$
\Big|\frac{1}{|z|}p_0(\sqrt{|z|}Y+X_j)\Psi(Y)-\frac{z}{|z|}\Big| \geq \frac{1}{2},
$$
on the set
$$\Big\{Y \in \real^{2n} : |Y| \leq \frac{1}{\sqrt{3c_3}}\Big\}.$$
When quantizing these symbols in the $\tilde{h}$-Weyl quantization with the new semiclassical parameter
\begin{equation}\label{mel1}
\tilde{h}=\frac{h}{|z|},
\end{equation}
we deduce from (\ref{msj-4.11}) and this ellipticity property that for all $0<h \leq h_0$, $C \geq c_{17}$, $z \in \Omega_{C,h}$ and
$u \in \mathcal{S}(\real^n)$,
$$
\big\|\Phi(\sqrt{\tilde{h}}Y)^wu\big\|_{L^2} \leq
\mathcal{O}(1)\Big\|\frac{1}{|z|}p_0(\sqrt{|z|}\sqrt{\tilde{h}}Y+X_j)^wu -\frac{z}{|z|}u\Big\|_{L^2}+\mathcal{O}(\tilde{h})\|u\|_{L^2}.
$$
We recall from (\ref{msj14}) that
$$\tilde{h}=\frac{h}{|z|} \leq \frac{1}{C} \ll 1,$$
where the large constant $C \gg 1$ appearing in (\ref{msj14}) remains to be chosen.
One can then deduce from (\ref{mel1}) and the symplectic invariance property of the Weyl quantization (Theorem~18.5.9 in~\cite{Hormander})
while using the following affine symplectic transformation
$$
X \mapsto X-\frac{1}{\sqrt{h}}X_j,
$$
that for all $0<h \leq h_0$, $C \geq c_{18}$, $z \in \Omega_{C,h}$ and $u \in \mathcal{S}(\real^n)$,
$$
\Big\|\Phi\Big(\frac{\sqrt{h}X-X_j}{\sqrt{|z|}}\Big)^wu\Big\|_{L^2} \leq \mathcal{O}(1)\frac{1}{|z|}\|p_0(\sqrt{h}X)^wu -zu\|_{L^2}+
\mathcal{O}\Big(\frac{h}{|z|}\Big)\|u\|_{L^2},
$$
where $c_{18}$ is a large positive constant and $h_0$ a new positive constant with $0<h_0 \ll 1$.
This proves the estimate (\ref{msj20}). We can next conclude as follows.
Noticing from (\ref{msj14}) that
$$
\mathcal{O}\Big(\sqrt{\frac{h}{|z|}}\Big)=\mathcal{O}\Big(\frac{1}{\sqrt{C}}\Big)
$$
and
$$
h^{\frac{2k_0}{2k_0+1}}|z|^{\frac{1}{2k_0+1}}\mathcal{O}\Big(\frac{1}{|z|}\Big)=\mathcal{O}\Big(\frac{1}{C^{\frac{2k_0}{2k_0+1}}}\Big),
$$
we deduce from (\ref{msj18.12}) and (\ref{msj20}) that there exist some positive constants $c_0$ and $\tilde{c}_0$ such that for all
$0<h \leq h_0$, $C \geq c_0$, $z \in \Omega_{C,h}$ and $u \in \mathcal{S}(\real^n)$,
\begin{equation}
\label{msj23}
\big\|p_0(\sqrt{h}X)^wu-zu\|_{L^2} \geq \tilde{c}_0 h^{\frac{2k_0}{2k_0+1}}|z|^{\frac{1}{2k_0+1}}\|u\|_{L^2}.
\end{equation}
Finally, we deduce from the symplectic invariance property of the Weyl quantization (Theorem~18.5.9 in~\cite{Hormander}) while using the linear
symplectic transformation
$$
(x,\xi) \mapsto (h^{-\frac{1}{2}}x,h^{\frac{1}{2}}\xi),
$$
that we have
for all $0<h \leq h_0$, $C \geq c_0$, $z \in \Omega_{C,h}$ and $u \in \mathcal{S}(\real^n)$,
\begin{equation}
\label{msj23.4}
\big\|p_0^w(x,hD_x)u-zu\|_{L^2} \geq \tilde{c}_0 h^{\frac{2k_0}{2k_0+1}}|z|^{\frac{1}{2k_0+1}}\|u\|_{L^2}.
\end{equation}
Recalling that $m=1$ and noticing from the asymptotic expansion (\ref{xi1}) and the Calder\'on-Vaillancourt Theorem that
$$\|P^w(x,hD_x;h) - p_0^w(x,hD_x)\|_{\mathcal{L}(L^2)}=\mathcal{O}(h),$$
when $h \rightarrow 0$, we finally obtain by possibly increasing the value of the positive constant $c_0>0$ that for all
$0<h \leq h_0$, $C \geq c_0$, $z \in \Omega_{C,h}$ and $u \in \mathcal{S}(\real^n)$,
\begin{equation}
\label{msj23.5}
\big\|P^w(x,hD_x;h)u-zu\|_{L^2} \geq \frac{\tilde{c}_0}{2} h^{\frac{2k_0}{2k_0+1}}|z|^{\frac{1}{2k_0+1}}\|u\|_{L^2},
\end{equation}
where $h_0$ is a new positive constant such that $0<h_0 \ll 1$. This ends the proof of Theorem~\ref{theo}.

\begin{appendix}
\section{Appendix on Wick calculus}\label{Wick}
\setcounter{equation}{0}

The purpose of this section is to recall the definition and basic properties of the Wick quantization that we need for the proof of
Theorem~\ref{theo}. We follow here the presentation of the Wick quantization given by N.~Lerner in \cite{Lerner,cubo,birkhauser} and refer the reader to his
works for the proofs of the results recalled below.

The main property of the Wick quantization is its property of positivity, i.e., that non-negative Hamiltonians define non-negative operators
$$a \geq 0 \Rightarrow a^{\textrm{Wick}} \geq 0.$$ We recall that this is not the case for the Weyl quantization and refer to \cite{Lerner}
for an example of non-negative Hamiltonian defining an operator which is not non-negative.

Before defining properly the Wick quantization, we first need to recall the definition of the wave packets transform of a function
$u \in \mathcal{S}(\real^n)$,
$$Wu(y,\eta)=(u,\varphi_{y,\eta})_{L^2({\bf R}^n)}=2^{n/4}\int_{{\bf R}^n}{u(x)e^{- \pi (x-y)^2}e^{-2i \pi(x-y).\eta}dx}, \ (y,\eta) \in \real^{2n}.$$
where
$$\varphi_{y,\eta}(x)=2^{n/4}e^{- \pi (x-y)^2}e^{2i \pi (x-y).\eta}, \ x \in \mathbb{R}^n,$$
and $x^2=x_1^2+...+x_n^2$. With this definition, one can check (See Lemma 2.1 in \cite{Lerner}) that
the mapping $u \mapsto Wu$ is continuous from $\mathcal{S}(\real^n)$ to $\mathcal{S}(\real^{2n})$, isometric from $L^{2}(\real^n)$ to $L^2(\real^{2n})$
and that we have the
reconstruction formula
\begin{equation}\label{lay0.1}
\forall u \in \mathcal{S}(\real^n), \forall x \in \real^n, \ u(x)=\int_{{\bf R}^{2n}}{Wu(y,\eta)\varphi_{y,\eta}(x)dyd\eta}.
\end{equation}
We denote by $\Sigma_Y$ the operator defined in the Weyl quantization by the symbol
$$p_Y(X)=2^n e^{-2\pi|X-Y|^2}, \ Y=(y,\eta) \in \real^{2n},$$
by using the same normalization
\begin{equation}\label{lay3}
(a^wu)(x)=\int_{{\bf R}^{2n}}{e^{2i\pi(x-y).\xi}a\Big(\frac{x+y}{2},\xi\Big)u(y)dyd\xi},
\end{equation}
as in \cite{Lerner}.
This operator is a rank-one orthogonal projection
$$\big{(}\Sigma_Y u\big{)}(x)=Wu(Y)\varphi_Y(x)=(u,\varphi_Y)_{L^2({\bf R}^n)}\varphi_Y(x),$$
and we define the Wick quantization of any $L^{\infty}(\real^{2n})$  symbol $a$ as
\begin{equation}\label{lay0.2}
a^{\textrm{Wick}}=\int_{{\bf R}^{2n}}{a(Y)\Sigma_Y dY}.
\end{equation}
More generally, one can extend this definition when the symbol $a$ belongs to $\mathcal{S}'(\real^{2n})$ by defining the operator
$a^{\textrm{Wick}}$ for any $u$ and $v$ in $\mathcal{S}(\real^{n})$ by
$$<a^{\textrm{Wick}}u,\overline{v}>_{\mathcal{S}'({\bf R}^{n}),\mathcal{S}({\bf R}^{n})}=
<a(Y),(\Sigma_Yu,v)_{L^2({\bf R}^n)}>_{\mathcal{S}'({\bf R}^{2n}),\mathcal{S}({\bf R}^{2n})},$$
where $<\textrm{\textperiodcentered},\textrm{\textperiodcentered}>_{\mathcal{S}'({\bf R}^n),\mathcal{S}({\bf R}^n)}$ denotes the duality bracket
between the spaces $\mathcal{S}'(\real^n)$ and $\mathcal{S}(\real^n)$. The Wick quantization is a positive quantization
\begin{equation}\label{lay0.5}
a \geq 0 \Rightarrow a^{\textrm{Wick}} \geq 0.
\end{equation}
In particular, real Hamiltonians get quantized in this quantization by formally self-adjoint operators and one has
(See Proposition 3.2 in \cite{Lerner}) that $L^{\infty}(\real^{2n})$ symbols define bounded operators on $L^2(\real^n)$ such that
\begin{equation}
\label{lay0}
\|a^{\textrm{Wick}}\|_{\mathcal{L}(L^2({\bf R}^n))} \leq \|a\|_{L^{\infty}({\bf R}^{2n})}.
\end{equation}
According to Proposition~3.3 in~\cite{Lerner}, the Wick and Weyl quantizations of a symbol $a$ are linked by the following identities
\begin{equation}\label{lay1bis}
a^{\textrm{Wick}}=\tilde{a}^w,
\end{equation}
with
\begin{equation}\label{lay2bis}
\tilde{a}(X)=\int_{{\bf R}^{2n}}{a(X+Y)e^{-2\pi |Y|^2}2^ndY}, \ X \in \real^{2n},
\end{equation}
and
\begin{equation}\label{lay1}
a^{\textrm{Wick}}=a^w+r(a)^w,
\end{equation}
where $r(a)$ stands for the symbol
\begin{equation}\label{lay2}
r(a)(X)=\int_0^1\int_{{\bf R}^{2n}}{(1-\theta)a''(X+\theta Y)Y^2e^{-2\pi |Y|^2}2^ndYd\theta}, \ X \in \real^{2n}.
\end{equation}
We also recall the following composition formula obtained in the proof of Proposition~3.4 in~\cite{Lerner},
\begin{equation}\label{lay4}
a^{\textrm{Wick}} b^{\textrm{Wick}} =\Big{[}ab-\frac{1}{4 \pi} a'.b'+\frac{1}{4i \pi}\{a,b\} \Big{]}^{\textrm{Wick}}+S,
\end{equation}
with $\|S\|_{\mathcal{L}(L^2({\bf R}^n))} \leq d_n \|a\|_{L^{\infty}}\gamma_{2}(b),$
when $a \in L^{\infty}(\real^{2n})$ and $b$ is a smooth symbol satisfying
$$\gamma_2(b)=\sup_{X \in {\bf R}^{2n}, \atop T \in {\bf R}^{2n}, |T|=1}|b^{(2)}(X)T^2| < +\infty.$$
The term $d_n$ appearing in the previous estimate stands for a positive constant depending only on the dimension $n$; and the notation $\{a,b\}$
denotes the Poisson bracket
$$\{a,b\}=\frac{\partial a}{\partial \xi}.\frac{\partial b}{\partial x}-\frac{\partial a}{\partial x}.\frac{\partial b}{\partial \xi}.$$

\end{appendix}

\end{document}